%% file: main.tex
\documentclass[preprint,10pt,3p]{elsarticle}

\usepackage{amsmath,amssymb,amsthm}
\usepackage{booktabs}
\usepackage{graphicx}
\usepackage{multirow}
\usepackage[table]{xcolor}
\usepackage{algorithm2e}
\PassOptionsToPackage{hyphens}{url}
\usepackage[colorlinks=true,allcolors=blue]{hyperref}
\usepackage{url}
\usepackage{etoolbox}
\usepackage{placeins}
\AtBeginEnvironment{table}{\setlength{\belowcaptionskip}{3pt}}

\graphicspath{{figs/}}

\newtheorem{proposition}{Proposition}
\newtheorem{remark}{Remark}

\newcommand{\bx}{\mathbf{x}}
\newcommand{\ba}{\mathbf{a}}
\newcommand{\bF}{\mathbf{F}}

\newcommand{\refdom}{\widehat{\Omega}}
\newcommand{\uh}{u_h}
\newcommand{\ch}{c_h}
\newcommand{\uhyb}{u_{\mathrm{hyb}}}
\newcommand{\Kv}{K_v}
\newcommand{\Nv}{N_v}
\newcommand{\rhov}{\rho_v}
\newcommand{\xv}{\bx_v}
\newcommand{\dloc}{d_{\mathrm{loc}}}
\newcommand{\wdata}{w_{\mathrm{data}}}
\newcommand{\wbc}{w_{\mathrm{bc}}}
\newcommand{\eLtwo}{e_{L^2}}
\newcommand{\eHone}{e_{H^1}}
\newcommand{\Ltwo}{L^2}
\newcommand{\Hone}{H^1}
\newcommand{\sci}[2]{#1\times10^{#2}}
\newcommand{\codeurl}{https://github.com/Sima-Naraghi/iga-kan}

\journal{}

\begin{document}

\begin{frontmatter}

\title{IGA--KAN: Isogeometric Analysis with Physics-Informed Closed-Form Kolmogorov--Arnold Networks for Forward and Inverse PDEs}

\author[am]{Sima Naraghi}
\ead{s\_naraghi@sbu.ac.ir}

\author[cds,cog]{Kourosh Parand}
\ead{k\_parand@sbu.ac.ir}
\cortext[cor]{Corresponding author.}

\author[cse]{Amirhossein Sadr}
\ead{a.sadr@mail.sbu.ac.ir}

\author[cse]{Dara Rahmati}
\ead{d\_rahmati@sbu.ac.ir}

\address[am]{Department of Applied Mathematics, Faculty of Mathematical Sciences,
Shahid Beheshti University, Tehran, Iran}
\address[cds]{Department of Computer and Data Sciences, Faculty of Mathematical Sciences,
Shahid Beheshti University, Tehran, Iran}
\address[cog]{Department of Cognitive Modeling, Institute for Cognitive and Brain Sciences,
Shahid Beheshti University, Tehran, Iran}
\address[cse]{Department of Computer Science and Engineering, Shahid Beheshti University,
Tehran, Iran}

\begin{abstract}
Isogeometric analysis (IGA) solves partial differential equations accurately on exact NURBS geometry, whereas
neural solvers are mesh-free but often orders of magnitude less accurate and typically trained by non-convex
optimization without error control. We propose IGA--KAN, which uses local Kolmogorov--Arnold networks, fitted in
closed form, to improve the IGA solution instead of replacing it. An IGA Galerkin solve produces $\uh$; on
every knot-vertex patch a Kolmogorov--Arnold ridge model is fitted to the strong form of the equation, the
exact boundary data and $\uh$, and the models are blended by IGA hat functions. With fixed inner
functions the fit is one batched linear least-squares problem, without optimizer, learning rate or initialization.
An \emph{a posteriori} safeguard, motivated by a maximum-principle bound, decides where local models are used,
keeping the IGA solution elsewhere. On eight benchmarks with exact solutions, five from the literature and one
also posed on a domain fitted to a brain slice from MRI, the method reduces the error of IGA, at an unchanged
number of Galerkin unknowns, by factors of $4.2$ to $90$ in $L^2$ and $4.1$ to $220$ in $H^1$ on the reference
meshes, and its $L^2$ error is $6$ to $6\times10^{4}$ times smaller than that of the best Kolmogorov--Arnold
network trained from scratch on the same equations with a fixed budget. In an inverse problem it recovers an
unknown constant source from one noise-free observation $167$ times more accurately than IGA. The gain is
attributed to the superconvergence of local averages of the Galerkin solution.
\end{abstract}

\begin{keyword}
Isogeometric Analysis \sep Kolmogorov--Arnold Networks \sep Scientific Machine Learning \sep
Post-Processing \sep Superconvergence \sep Physics-Informed Learning \sep Inverse Problems
\end{keyword}

\end{frontmatter}

\input{sections/introduction}
\input{sections/related_work}
\input{sections/method}
\input{sections/mechanism}

\input{sections/numerics}

\FloatBarrier
\input{sections/conclusions}

\section*{Code and data availability}
\begin{sloppypar}
The code and the result files of this study are openly available at \url{\codeurl}. The repository contains the
Python library \texttt{igakan} (the isogeometric solvers, the local Kolmogorov--Arnold fit with its safeguard, and
the stand-alone network baselines); one executed Jupyter notebook per benchmark, which solves it with IGA and
IGA--KAN and evaluates the network baselines; the scripts that produce every table and figure of this paper,
including those that train the networks; and the result files (CSV, JSON and NPZ, with the logs and, for the flower, the
Fisher--Kolmogorov equation, the heat equation and the coupled system, the trained network weights) from which every number reported here is read. The brain slice of Section~\ref{sec:fk} is taken
from the subject labeled \texttt{abby} of the public MRI2FEM data set \cite{mri2femdata} (version 2.0, CC BY 4.0),
which is not redistributed; the repository gives the instructions to obtain it.
\end{sloppypar}

\section*{Declaration of generative AI and AI-assisted technologies in the manuscript preparation process}
During the preparation of this work the authors used Claude (Anthropic) in order to edit the text and to check its
grammar and language. After using this tool, the authors reviewed and edited the content as needed and take full
responsibility for the content of the published article.

\bibliographystyle{elsarticle-num}
\setlength{\bibsep}{1pt plus 0.3ex}
\bibliography{refs}

\end{document}

%% file: sections/introduction.tex
\section{Introduction}\label{sec:intro}

Partial differential equations posed on curved geometries, whether described by CAD or segmented from medical
images, are at the core of computational mechanics, and for them classical Galerkin discretizations offer what an
engineering or clinical computation needs: convergence under refinement, error estimates and a cost that grows
predictably with the number of unknowns. Isogeometric analysis (IGA) adds the exact geometry: it uses the NURBS
description of a CAD geometry as the trial space of a Galerkin method, so that conic sections are represented
exactly and the discrete functions are $C^{p-1}$ across knot spans \cite{hughes2005isogeometric,cottrell2009isogeometric}.
Its accuracy, however, is paid for in unknowns: at a given degree, a smaller error requires a finer mesh and a larger
linear system. Machine learning has been proposed as a way to relax this trade-off.

Physics-informed neural networks (PINNs) train a network on the residual of the equation at collocation points
\cite{lagaris1998ann,raissi2019physics}. They are mesh-free and differentiable, but their training is a non-convex
optimization without error control, and its failure modes are well documented: spectral bias slows the learning of
high frequencies, and the loss landscape becomes ill-conditioned as convection or reaction coefficients grow
\cite{wang2021eigenvector,wang2022ntk,krishnapriyan2021failure}. A systematic comparison with the finite element
method on linear and nonlinear model problems found that PINNs did not outperform it in either accuracy or
solution time \cite{grossmann2024pinnfem}, and a systematic review of machine learning for fluid-related equations
found that most articles claiming to outperform a standard numerical method had compared against a weak baseline
\cite{mcgreivy2024weak}. For problems that classical solvers already handle well, the relevant question is
therefore not whether a network can replace the solver, but whether it can add accuracy to the solution the solver
produces.

Kolmogorov--Arnold networks (KANs) are a natural candidate for such a role, because their building block is close to
that of isogeometric analysis: learnable univariate splines replace fixed activations \cite{liu2024kan}, and other
univariate families, rational functions in rKAN \cite{aghaei2026rkan} and radial basis functions in Lynx-Net
\cite{mirzabeigi2026lynxnet}, have been proposed as well. So far, however, KANs have been used in partial
differential equations in the same role as other networks: inside a physics-informed loss. KINN
\cite{wang2025kinn}, across strong, energy and boundary-integral formulations, finds a KAN more accurate than a
multilayer perceptron on singularities and discontinuous coefficients and far less affected by spectral bias,
provided its spline grid is fine enough, but no more accurate than the perceptron on complex geometries, and
PG-KINN \cite{sadr2026pgkinn} moves it to a Petrov--Galerkin weak form. On the isogeometric side, networks of other
types produce the \emph{control net} of an isogeometric space \cite{li2023igapignn,moller2026iganets,gasick2023isogeometric},
which makes the geometry exact and the essential conditions strict, but bounds the accuracy by that of the
discretization being imitated. In both lines the network is the solver: it inherits the non-convex training, and no
classical solution is left to fall back on where its prediction is poor. Networks that do correct a classical
solution, by super-resolution, solver-in-the-loop training or a fine-scale correction of a coarse finite element
solution, are trained on reference solutions
\cite{fukami2019superresolution,um2020solver,kapustsin2023hybrid,margenberg2024dnnmg}, and the closest scheme that
needs no such data, FEM-LSSVR \cite{babaei2025superresolution}, post-processes a low-order finite element solution
element by element (Section~\ref{sec:related}). What is missing is a correction that needs no reference data, works
on the exact geometry and is used only where computable tests support it, with the isogeometric solution kept
everywhere else.

Such a correction is possible because a Galerkin solution carries more accuracy than its pointwise values show. Its
error oscillates on the scale of the elements, whereas, for degree $p\ge2$, its local averages are far more
accurate: the error superconverges in negative norms, which classical post-processors exploit by convolution
\cite{bramble1977higher}, and its derivatives superconverge at particular points, which superconvergent patch
recovery exploits by local fitting \cite{zienkiewicz1992spr} (Section~\ref{sec:related-superconv}). A smooth local
model fitted to $\uh$ keeps the accurate averages and discards the oscillation; if it is also required to satisfy the
equation and the exact boundary data, it restores the fine structure that the averages lack. A small KAN can serve as
such a model, provided that it is used according to the following principle:

\begin{quote}
\emph{Solve the problem classically; use the network only locally, only where its prediction passes a computable
check, and with its inner functions fixed, so that fitting it is a linear problem.}
\end{quote}

The last condition is a design choice of this paper, not a property of KANs: the KAN-based solvers above train all the
univariate functions of the network together by non-convex optimization, whereas fixing the inner functions, and
freezing or linearizing any nonlinear term at the classical solution (Section~\ref{sec:method:operators}), turns the
fit of the outer functions into a linear least-squares problem with a closed-form solution.

The method follows this principle in three steps. A standard isogeometric Galerkin solve produces $\uh$ on the exact
NURBS geometry. On every knot-vertex patch, one small Kolmogorov--Arnold model of \emph{ridge} type (learnable
univariate splines composed with fixed linear forms) is fitted to the strong form of the equation, to the exact
Dirichlet data on the NURBS boundary crossing the patch and to $\uh$ itself. The local models are then blended by the
degree-one hat functions of the isogeometric mesh. Because neighboring local models overlap, they can be
cross-examined: an \emph{a posteriori} safeguard decides, from computable quantities alone, where a local model is
used. The isogeometric solution is never discarded: it enters every local fit as data and serves as the exact
fallback of the safeguard. The method therefore retains the exact geometry and the Galerkin solution, and adds
accuracy only where computable tests support a local model. We call the result IGA--KAN or, for short, the hybrid: not a
neural solver, but an isogeometric solution with a \emph{safeguarded local post-process}.

We use the Kolmogorov--Arnold form for its structure, not for its expressivity. With the single-span outer splines
used throughout, the local model spans exactly the polynomials of its degree, and on the first benchmark of
Section~\ref{sec:examples} a monomial basis of the same space reproduces the results (Remark~\ref{rem:space}). In
this paper the form is therefore a parametrization, and the accuracy comes from what the model is fitted to: the
equation, the exact boundary data and $\uh$. We keep the form for two reasons. Every derivative of a ridge function
$\Phi(\ba\cdot\bx)$ is a multiple of a derivative of the univariate spline $\Phi$, so the equation rows of every
operator in this paper, in two and in three dimensions, are assembled from univariate splines and their derivatives
alone. And it places the local model in the Kolmogorov--Arnold family: its outer functions are B-splines, like the
trial functions of IGA and the edge functions of a KAN, and can be given interior knots to leave the polynomial
space while the fit stays linear, an option this paper does not use.

Across the eight benchmarks of Section~\ref{sec:examples} the hybrid reduces the error of its input, at an unchanged
number of unknowns of the Galerkin solve, by factors of $4.2$ to $90$ in the relative $\Ltwo$ norm and $4.1$ to
$220$ in the $\Hone$ norm on the reference meshes; the largest factors are those of the disk, whose quadratic
solution lies in the local space, and on the six benchmarks with non-polynomial solutions, all but the disk and the
thick ring, the factors are $4.2$ to $27$ and $4.1$ to $148$. Its $\Ltwo$ error is also $6$ to $6\times10^{4}$ times
smaller than that of the best Kolmogorov--Arnold network we trained from scratch on the same equations with a fixed
training protocol; these factors describe that protocol, not the best accuracy such networks can reach
(Section~\ref{sec:examples}). With the settings
used throughout, the hybrid did not return a larger error than its input, beyond round-off, on any mesh of these
benchmarks.

Improving a Galerkin solution by local fitting is a classical idea (Section~\ref{sec:related-superconv}). What this
paper adds is a local fit that also uses the equation and the exact geometry, takes the form of a Kolmogorov--Arnold
model computed in closed form and is used only where computable tests support it, together with an analysis of its
mechanism and numerical evidence on eight benchmarks. Specifically, the paper makes the following four contributions.

\begin{enumerate}
\item \textbf{A closed-form local Kolmogorov--Arnold post-process for isogeometric solutions.} The local model
  \eqref{eq:kan} has fixed inner functions and Bernstein outer functions, so its fit \eqref{eq:rows} to the
  equation, the boundary data and $\uh$ is one batched linear least-squares solve over all patches. The same
  construction covers advection--diffusion--reaction operators, the space--time heat operator, nonlinear terms
  frozen at or linearized about the classical solution, and triangularly coupled systems.
\item \textbf{An \emph{a posteriori} safeguard with an exact fallback.} Four computable tests (neighbor
  consensus, the size of the correction, a residual indicator and a $p$-consistency test) decide patch by patch
  whether a local model is used; where it is not, the isogeometric solution is kept. For elliptic operators the
  constant of the residual indicator is taken from a maximum-principle bound on the error of a local model
  (Proposition~\ref{prop:local}).
\item \textbf{An analysis of the mechanism.} The post-process is a local average whose kernel is chosen by the
  equation; Propositions~\ref{prop:blend} and~\ref{prop:local} bound the error of the blended solution by the
  residuals of the local models and by their errors on the patch boundaries, for which the residual and consensus
  tests are computable surrogates, and the analysis identifies the regimes in which the gain vanishes.
\item \textbf{Numerical evidence on eight benchmarks with exact solutions}, five of them taken from published
  work, on curved and complex NURBS geometries in two and three dimensions and on a ring-shaped domain fitted to a
  brain slice segmented from magnetic resonance images, with a comparison against isogeometric analysis and against
  Kolmogorov--Arnold networks (KAN, KINN) and the published results of PI-GGN, KINN and PG-KINN.
\end{enumerate}

The paper is organized as follows. Section~\ref{sec:related} reviews the literature, Section~\ref{sec:method} states
the method, Section~\ref{sec:mechanism} analyzes why it works and when the gain vanishes, Section~\ref{sec:examples}
gives the numerical examples, and Section~\ref{sec:conclusions} concludes and lists directions for further work.

%% file: sections/related_work.tex
\section{Related work}\label{sec:related}

\subsection{Isogeometric analysis}

Isogeometric analysis is a Galerkin method on the spline or NURBS space that already describes the CAD geometry
\cite{hughes2005isogeometric,cottrell2009isogeometric}. Two properties matter here. The
map $\mathbf F:\widehat\Omega=(0,1)^d\to\Omega$ is fixed at the coarsest mesh and unchanged by
refinement, so a point can be placed on $\partial\Omega$ exactly; and $k$-refinement gives $C^{p-1}$
spaces of degree $p$ at roughly one unknown per element, $(n+p)^2$ on an $n\times n$ mesh in two
dimensions against the $(np+1)^2$ of the $C^0$ finite element space of the same degree and mesh,
with established approximation properties, near-optimality against Sobolev $n$-widths and far more
faithful operator spectra \cite{bazilevs2006approximation,evans2009nwidths,cottrell2006vibrations}.
That smoothness is what
makes the Galerkin error amenable to local averaging (Section~\ref{sec:related-superconv}).

\subsection{Physics-informed neural networks and their failure modes}

Collocating a differential operator on a neural ansatz is an old idea, revived as the physics-informed neural
network (PINN) \cite{lagaris1998ann,raissi2019physics}. Its difficulties are well documented.
The eigenvector bias of the neural tangent kernel slows high-frequency learning, and the residual
and boundary loss terms converge at very different rates \cite{wang2021eigenvector,wang2022ntk}. The
residual loss landscape becomes ill-conditioned as a convection or reaction coefficient grows even
when the PINN can represent the solution, so that the failure is optimization rather than
expressivity \cite{krishnapriyan2021failure}. Penalty boundary conditions hold only approximately
and add a
weight to be tuned; the alternative is to build them into the ansatz \cite{sukumar2022exact}.

For linear equations, randomized networks avoid the non-convex training altogether: the hidden-layer parameters
are drawn at random and frozen, and the output weights follow from one linear least-squares problem. The
physics-informed extreme learning machine applies this to linear equations, with one network or, in a distributed
variant, one per cell \cite{dwivedi2020pielm}. Local extreme learning machines assign one such network to each
sub-domain, impose $C^k$ continuity across the sub-domain boundaries and treat nonlinear equations by nonlinear
least squares, which can be trapped in local minima without random perturbations \cite{dong2021locelm}, and the
random feature method pieces local random-feature models together with a partition of unity \cite{chen2022rfm};
both report accuracy on par with classical solvers. All three are stand-alone solvers, with random features and a
least-squares problem coupled over the whole spatial domain.

\subsection{Kolmogorov--Arnold networks}

A KAN replaces the fixed activations of a multilayer perceptron (MLP) by learnable univariate B-splines on the graph
edges \cite{liu2024kan}. Which univariate family those edge functions are drawn from is itself an open question,
and rational ones, Pad\'e approximants and rational Jacobi functions, have been proposed in place of the
splines \cite{aghaei2026rkan}. The transfer to PDEs has mostly substituted a KAN for the MLP inside an existing
physics-informed loss. KINN reports gains on singularities and discontinuous coefficients, together with a loss of
accuracy on geometrically complex domains \cite{wang2025kinn}. MLP and KAN have been compared in a controlled
setting \cite{shukla2024comparison}. The same substitution reaches operator learning and point-cloud regression
\cite{abueidda2025deepokan,kashefi2025kanpointnet}. Two recent schemes move toward variational forms, the second
of them, PG-KINN, with an independent, compactly supported piecewise-polynomial test space
\cite{kim2026weakform,sadr2026pgkinn}. Two of the authors of PG-KINN are authors of the present paper. Outside the
Kolmogorov--Arnold family, PI-GGN trains a graph network on a finite-element Galerkin residual, for forward and
inverse problems alike \cite{gao2022piggn}. Its disk benchmark is one of ours (Section~\ref{sec:inverse}). All of
these use the network as the discretization; we are not aware of work that keeps an isogeometric Galerkin solution
as the substrate and uses KANs only as a local post-process.

\subsection{Classical solver plus learned correction}

Closest in intent is the hybrid FEM-LSSVR scheme, in which element-wise least-squares support vector regression
with higher-order Legendre kernels interpolates, under the PDE, between the nodal values of a low-order finite
element solve and stays consistent with them at the element boundaries \cite{babaei2025superresolution}. Its local
fit is convex and closed-form, and the same principle, a fixed feature map followed by one linear least-squares
solve with the equation in the rows, is what makes the fit of Section~\ref{sec:method} linear. Super-resolution
of PDE fields, learned discretizations and closures, and solver-in-the-loop corrections are trained on reference
data and carry no guarantee on an unseen field
\cite{fukami2019superresolution,fukami2023survey,barsinai2019learning,kochkov2021machine,um2020solver}. Networks
have also predicted the fine-grid fluctuation above a coarse multigrid solution
\cite{margenberg2022multigrid,margenberg2024dnnmg}. Others have corrected a coarse finite element solution, enriched
a Lagrange space and generated element stiffness matrices
\cite{kapustsin2023hybrid,barucq2025enriching,jung2020deeplearned}. The isogeometric proposals so far are surrogates
that reproduce an IGA solution faster: a network produces the spline coefficients, so accuracy is bounded above by
the discretization being imitated \cite{li2023igapignn,moller2026iganets,gasick2023isogeometric}.

\subsection{Post-processing and superconvergence}\label{sec:related-superconv}

The mechanism our method relies on is classical. A local average of a Galerkin solution against a compactly
supported kernel is accurate to $O(h^{2p})$ in the interior of a locally translation-invariant mesh for degree-$p$
elements, although the solution itself is
only $O(h^{p+1})$, because the error superconverges in negative norms; for $p=1$ the two rates coincide and no order
is gained \cite{bramble1977higher}. The same holds for derivatives \cite{thomee1977high}. An early tensor-product
instance and a collection of the theory are also available \cite{douglas1974linfty,wahlbin1995superconvergence}.
The convolution post-processor reappears as the smoothness-increasing accuracy-conserving (SIAC) filter for discontinuous Galerkin fields
\cite{cockburn2003enhanced,ji2014siac}. On the first benchmark of Section~\ref{sec:examples}, a SIAC filter
implemented for comparison is $32$ times cheaper than the hybrid on the reference mesh and reduces the IGA error by
more on an interior box than the hybrid does globally, but globally its $L^2$ error is about ten times that of the
unfiltered IGA solution, because its kernel must be shifted near the boundary. A second classical family recovers a
better field by fitting rather than by convolution, and our per-patch fit belongs to it. The superconvergent patch
recovery (SPR) fits a higher-order polynomial by least squares to the derivatives sampled at superconvergent points of a
nodal patch \cite{zienkiewicz1992spr}. The equilibrium residual and the boundary conditions have been added to that
patch problem \cite{wiberg1993spre,wiberg1994spreb}, the patch polynomials have been blended by the nodal shape
functions \cite{blacker1994conjoint}, and SPR-C imposes equilibrium and boundary conditions as constraints
\cite{rodenas2007sprc}. Polynomial-preserving recovery recovers the solution rather than its gradient
\cite{zhang2005ppr}, and patch recovery has been carried over to adaptive isogeometric analysis as an error
estimator \cite{kumar2017spriga}. Our fit combines these ingredients: a recovery of the solution itself, with the
strong-form residual and the boundary data in its rows, blended by a partition of unity
\cite{melenk1996partition}. What is new is their use as a solution post-process on exact NURBS geometry (boundary
rows on the exact NURBS trace, patches at knot vertices, a periodic seam and a collapsed side identified), the
Kolmogorov--Arnold ridge parametrization, which keeps every operator of Section~\ref{sec:method:operators}
univariate, and the \emph{a posteriori} acceptance filter of Section~\ref{sec:method:safeguard}. The equation in the
rows is what carries the gain: on the first benchmark of Section~\ref{sec:examples} the fit without the physics rows
gains a factor of $4.0$ in $L^2$ where the full fit gains $21$ (Remark~\ref{rem:space}). For splines the superconvergent points are known and
yield optimal-order collocation, so the pointwise structure the classical theory predicts is present in the
isogeometric setting \cite{montardini2017optimal,wang2021greville}.

\subsection{Positioning of the present method}

This is not a neural solver: a classical isogeometric solve is performed first and never discarded, and the
learned object is a family of small local models that post-process it, fitted by one batched linear least-squares
problem, with no optimizer, no learning rate and no local minimum. Relative to FEM-LSSVR, whose principle of a fixed feature
map and one linear system we adopt, we post-process an isogeometric solution on exact NURBS geometry, replace the
Legendre kernel by a Kolmogorov--Arnold ridge model, and fit on overlapping knot-vertex patches blended by a
partition of unity instead of element by element \cite{babaei2025superresolution}. The randomized solvers above
share the linear least-squares fit for linear equations; here the inner functions are fixed directions instead of
random draws, the patch problems are small and independent of one another instead of coupled over the whole domain,
the isogeometric solution enters the rows as data, and nonlinear terms are frozen or linearized at it, so that the
fit stays linear (Section~\ref{sec:method}) \cite{dwivedi2020pielm,dong2021locelm,chen2022rfm}. Unlike the KAN
solvers for PDEs, our KAN never has to represent the solution globally or resolve a singularity on its own. With the
settings of this paper it spans exactly the polynomials of its degree (Remark~\ref{rem:space}), so what we claim is
the safeguarded closed-form post-process built from it, not a gain in expressivity due to the Kolmogorov--Arnold
form. An
\emph{a posteriori} safeguard using no exact solution then accepts a correction, refits it on a smaller support, or
discards it, patch by patch, so where none is accepted the hybrid returns the isogeometric solution. The safeguard
is a computable filter built on the error bound of Section~\ref{sec:mechanism:bounds}; with the settings used
throughout it did not leave the hybrid with a larger error than its input on the benchmarks of
Section~\ref{sec:examples}. Because the mechanism is the
superconvergence of local averages, an $L^2$ gain that grows under refinement requires $p\ge2$: at $p=1$ the two
rates of Section~\ref{sec:related-superconv} coincide, and on the thick ring (Section~\ref{sec:thickring}) the measured
$L^2$ gain at $p=1$ is smaller than at $p=2$ and $p=3$ and decreases between the two meshes computed, from $2.8$ to
$2.0$ (Fig.~\ref{fig:ring}a), while the $H^1$ gain does not decrease. What improves is accuracy per degree of freedom of the Galerkin solve rather than the cost of a
single solve.

%% file: sections/method.tex
\section{The hybrid IGA--KAN method}\label{sec:method}

This section states the method. It starts from an isogeometric solve on the exact geometry, whose Galerkin solution
is $\uh$, and returns a solution on the same geometry that is built like a piecewise multilinear spline on the
isogeometric mesh, except that each nodal value is replaced, wherever a computable test accepts it, by a small local
function fitted in the least-squares sense to the equation, the exact boundary data and $\uh$
(Section~\ref{sec:method:blend}). The method has three steps:
\begin{enumerate}
\item \emph{Isogeometric solve} (Section~\ref{sec:method:iga}): a standard Galerkin solve on the exact NURBS
  geometry gives $\uh$.
\item \emph{Local Kolmogorov--Arnold fit} (Sections~\ref{sec:method:local} and~\ref{sec:method:fit}): on every
  knot-vertex patch, a Kolmogorov--Arnold model of ridge type, with fixed inner functions, is fitted to the strong
  form of the equation, the exact boundary data and $\uh$; all patches are fitted together in one batched linear
  least-squares solve.
\item \emph{Partition-of-unity blend} (Section~\ref{sec:method:blend}): the local models are combined by the
  degree-one hat functions of the isogeometric mesh.
\end{enumerate}
An \emph{a posteriori} safeguard (Section~\ref{sec:method:safeguard}) decides, patch by patch and without the exact
solution, whether a local model is used; a patch whose model is not accepted contributes $\uh$ itself to the blend.
Section~\ref{sec:method:operators} lists the operators to which the construction applies, and
Section~\ref{sec:method:cost} its cost. We call the complete method IGA--KAN or, for short, the hybrid, and its
second part (fit, safeguard and blend, without the solve) the post-process.
Figure~\ref{fig:method_hero} compares the method with classical isogeometric analysis and with a stand-alone
Kolmogorov--Arnold network, and Figure~\ref{fig:method_pipeline} shows its stages.

\begin{figure}[!t]
\centering
\includegraphics[width=\linewidth]{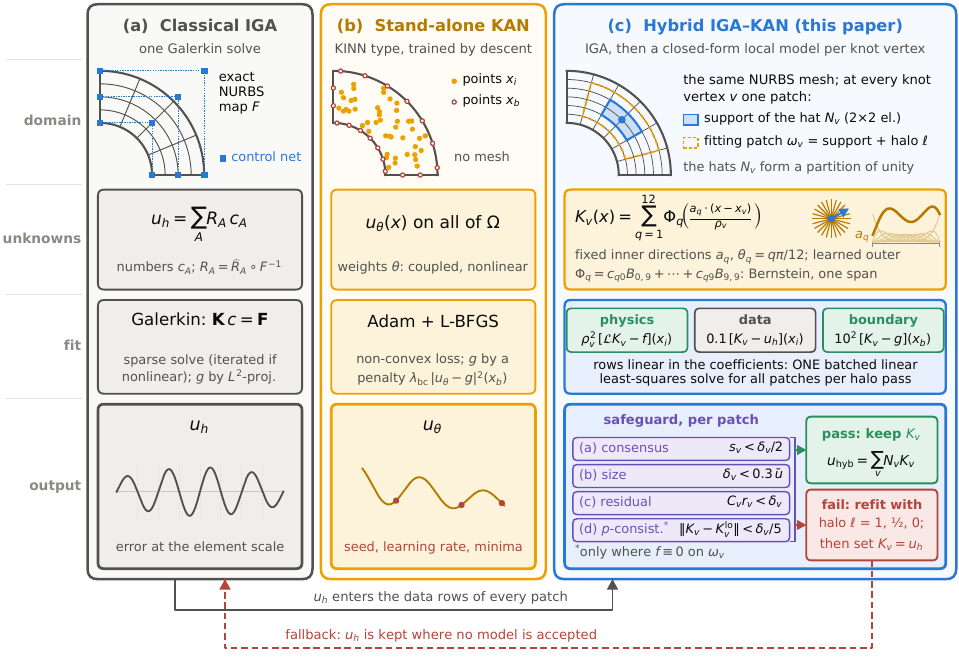}
\caption{What the hybrid adds to a Galerkin solve, read across four rows: what each method sees of the domain,
what it solves for, how the unknowns are found and what it returns. (a) Classical isogeometric analysis, whose
unknowns are the control variables $c_A$. (b) A stand-alone Kolmogorov--Arnold network of the KINN type: one global
$u_\theta$ on sampled points, its weights found by a non-convex descent and its boundary data imposed by a penalty.
(c) The method of this section: on every knot-vertex patch the ridge model \eqref{eq:kan}, with fixed directions
$\ba_q$ and outer functions $\Phi_q$ in the Bernstein basis, fitted by one batched linear least-squares solve to
the physics, data and boundary rows \eqref{eq:rows} and screened by the four tests \eqref{eq:sg-a}--\eqref{eq:sg-d}.
The two lanes below show how the hybrid depends on IGA: $\uh$ enters the data rows of every patch, and a patch that
fails the tests returns $\Kv:=\uh$ (red).}
\label{fig:method_hero}
\end{figure}

\subsection{Model problem and the isogeometric solve}\label{sec:method:iga}

Let $\Omega\subset\mathbb{R}^d$, $d\in\{2,3\}$, be the image of the parametric domain
$\refdom=(0,1)^d$ under a NURBS map $\bF$ of degree $p$ with control points and weights
$(\mathbf{P}_A,w_A)$. Circles, cylinders and the five-petal domain of Section~\ref{sec:flower} are
represented exactly by such a map; no geometry error enters any number reported in this paper.

We seek $u$ with $\mathcal{L}u=f$ in $\Omega$ and $u=g$ on $\Gamma_D\subseteq\partial\Omega$, for
$\mathcal{L}$ one of the operators of Section~\ref{sec:method:operators}. The Galerkin solution
$\uh\in V_h$ is computed in the push-forward NURBS basis $R_A=\widehat R_A\circ\bF^{-1}$, with Gauss
quadrature of at least $p+2$ points per direction, Dirichlet data imposed by $\Ltwo$-projection onto
the trace space and a
Newton or Picard iteration for nonlinear problems. This step is classical; the accuracy of $\uh$ is the
baseline against which every result below is measured.

\subsection{Local Kolmogorov--Arnold ridge models}\label{sec:method:local}

Let $\{\Nv\}_{v\in\mathcal{V}}$ be the degree-one tensor-product B-splines (hat functions) on the
\emph{same} breakpoints as the isogeometric mesh, one per knot vertex $v$, so that
\begin{equation}
  \sum_{v\in\mathcal{V}}\Nv\equiv 1 \quad\text{on }\refdom .
  \label{eq:pou}
\end{equation}
Write $\omega_v$ for the \emph{vertex patch}, or fitting patch, of $v$: the support of $\Nv$, that is, the $2^d$
elements around $v$, enlarged by a halo of $\ell\in\{1,\tfrac12,0\}$ \emph{local} element widths per side, so graded
meshes need no change. Every model is first fitted with $\ell=1$; the smaller halos are used only for the refits of
the safeguard (Section~\ref{sec:method:safeguard}). In what follows ``patch'' without qualification means a vertex patch; the patch of the geometry is called
the NURBS patch. Let
$\xv$ be the centroid of the mapped quadrature points of $\omega_v$ and
$\rhov=\max_i|\bx_i-\xv|$ its radius. On each patch we introduce a Kolmogorov--Arnold model of
\emph{ridge} type,
\begin{equation}
  \boxed{\;\Kv(\bx)=\sum_{q=1}^{Q}\Phi_q\!\left(\frac{\ba_q\cdot(\bx-\xv)}{\rhov}\right),\qquad
  |\ba_q|=1,\;}
  \label{eq:kan}
\end{equation}
with \emph{fixed} linear inner functions and learnable univariate outer B-splines $\Phi_q$ of degree
$\dloc$: the Kolmogorov--Arnold form with the inner layer frozen. The directions are equispaced on the
half circle in two dimensions, $\theta_q=q\pi/Q$, a fundamental domain of $\mathbb{RP}^1$ since $\ba$
and $-\ba$ give the same family, and over the half sphere in three. Throughout the paper $Q=12$ and
$\dloc=9$ with $\Phi_q$ single-span, i.e.\ a Bernstein polynomial of degree $9$ on $[-1.02,1.02]$ with
$\dloc+1=10$ coefficients, so that a local model carries $Q(\dloc+1)=120$ coefficients; in three
dimensions, $\dloc=8$ and $Q=45$ on the interval $[-1.075,1.075]$, whose half-width is $1.02$ divided by the largest
of the seven Gauss abscissae, so that the corners of the patch stay inside it. Figure~\ref{fig:method_patch} shows a patch and its local model.

\paragraph{Role of the Bernstein basis} The Bernstein polynomials carry the learnable part of the model. Each outer
function is expanded as
\begin{equation}
  \Phi_q(z)=\sum_{k=0}^{\dloc}c_{qk}\,B_{k,\dloc}(z),\qquad
  B_{k,\dloc}(z)=\binom{\dloc}{k}\,s^{k}(1-s)^{\dloc-k},\quad s=\frac{z+1.02}{2.04},
  \label{eq:bernstein}
\end{equation}
which is the B-spline basis of a KAN edge with a single knot span; in two dimensions it is evaluated by the same
Cox--de~Boor routine as the isogeometric basis. The $Q(\dloc+1)=120$ coefficients $c_{qk}$ are the only unknowns of the fit
(Section~\ref{sec:method:fit}), and the basis is used for three properties. First, the scaling by $\rhov$ maps the
argument $z_q=\ba_q\cdot(\bx-\xv)/\rhov$ of every sample point into $[-1,1]$, inside the interval of the basis,
which is widened by $2\%$ as a margin. Second, the derivatives of a Bernstein polynomial are Bernstein
polynomials of lower degree whose coefficients are differences of the $c_{qk}$, so $\Phi_q'$ and $\Phi_q''$ in
\eqref{eq:ridgeident} are evaluated exactly as fixed linear combinations of the unknowns; with the inner functions
fixed, every row of \eqref{eq:rows} is therefore linear in the $c_{qk}$. Third, lowering the degree of the basis gives the smaller class of the $p$-consistency test
\eqref{eq:sg-d} ($\dloc=7$, with $Q=10$). The basis adds no approximation power of its own (by the remark below, the local
space is $\mathbb{P}_{\dloc}$ whatever basis represents it), and a multi-span outer spline would replace it by a
piecewise polynomial one.

\begin{figure}[!tbp]
\centering
\includegraphics[width=\linewidth]{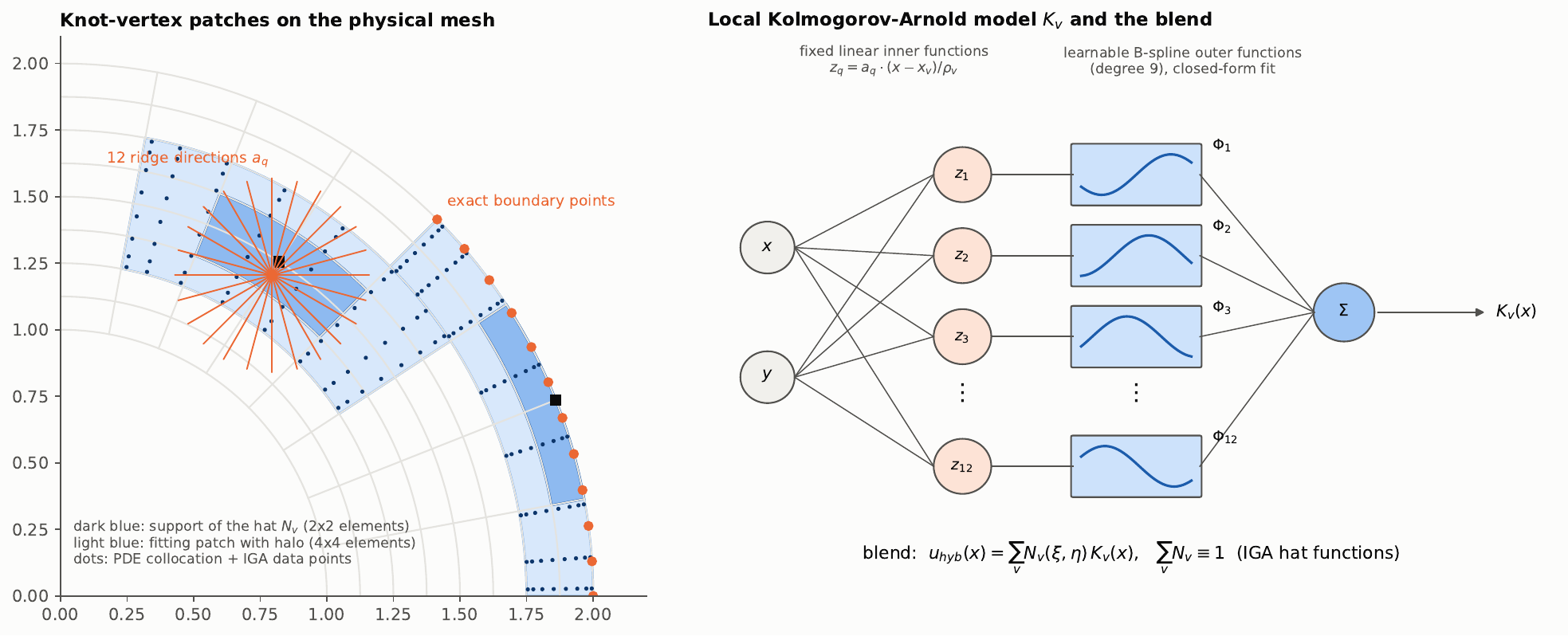}
\caption{Left: the knot-vertex patches on the physical mesh, that is, the support of one hat function $\Nv$
($2\times2$ elements) and its vertex patch $\omega_v$, that support enlarged by the halo, with the collocation and data
points and the exact boundary points marked. Right: the local model \eqref{eq:kan}, with fixed linear
inner functions $z_q=\ba_q\cdot(\bx-\xv)/\rhov$ and learnable degree-$\dloc$ outer B-splines.}
\label{fig:method_patch}
\end{figure}

\begin{remark}[what the local space is]\label{rem:space}
With $Q\ge\dim\mathcal{H}_{\dloc}$ generic directions, where $\mathcal{H}_k$ denotes the homogeneous polynomials
of degree $k$ in $d$ variables ($\dim\mathcal{H}_{\dloc}=\dloc+1$ in two dimensions, $(\dloc+1)(\dloc+2)/2$ in
three), the span of \eqref{eq:kan} is exactly $\mathbb{P}_{\dloc}(\mathbb{R}^d)$, since
$\{(\ba\cdot\mathbf{z})^k\}$ spans $\mathcal{H}_k$. The representation is therefore redundant ($120$ coefficients for a
$55$-dimensional space in two dimensions), and with the settings of this paper the Kolmogorov--Arnold
form is a \emph{parametrization} of $\mathbb{P}_{\dloc}$, not an extension of it. We keep it because it
makes \eqref{eq:ridgeident} univariate, so every operator of Section~\ref{sec:method:operators} reduces
to derivatives of one-dimensional splines, and because a multi-span $\Phi_q$ leaves the polynomial
span, covering local spaces that no polynomial basis reproduces. On the first benchmark of
Section~\ref{sec:examples} a monomial basis for $\mathbb{P}_{9}$ reproduces the hybrid to four digits at a quarter
of the cost, and multi-span outer splines change either error norm by less than $5\%$; without the physics rows
(a recovery of $\uh$ with the boundary data in the same local space) the gain falls from $21$ to $4.0$ in $L^2$
and from $75$ to $3.5$ in $H^1$, and without the boundary rows to $11$ and $35$ (in the error quadrature of this
ablation, one $40$-point Gauss rule per direction over the whole NURBS patch, the full fit gains $21$ and $75$ instead of the
$19$ and $90$ of Table~\ref{tab:ex1}). The gain thus comes from the equation and the exact boundary data in the rows,
not from the choice of basis.
Being a polynomial of degree $\dloc$, the local model is high-order and smooth: it can represent what the solution does inside one
patch, but not a singularity such as a crack tip or a re-entrant corner.
\end{remark}

\subsection{The fit is one linear least-squares problem}\label{sec:method:fit}

The coefficients of $\Phi_q$ follow from three kinds of rows, at the $m=n_s^d$ tensor Gauss points
$\bx_i$ of $\omega_v$ ($n_s=12$ per direction in two dimensions) and the $n_s^{d-1}$ points
$\bx_b$ of each Dirichlet side through it:
\begin{equation}
  \underbrace{\rhov^{2}\bigl[\mathcal{L}\Kv-f\bigr](\bx_i)}_{\text{physics}}
  \;\oplus\;
  \underbrace{\wdata\bigl[\Kv-\uh\bigr](\bx_i)}_{\text{data}}
  \;\oplus\;
  \underbrace{\wbc\bigl[\Kv-g\bigr](\bx_b)}_{\text{boundary}}
  \;\longrightarrow\;\min .
  \label{eq:rows}
\end{equation}
Here $\oplus$ stacks the residuals of the three blocks, at all points $\bx_i$ and $\bx_b$, into one vector, and the
fit minimizes its Euclidean norm, that is, the sum of the squared residuals of all three blocks, over the
coefficients $c_{qk}$ of \eqref{eq:bernstein}.
The boundary rows use the exact NURBS geometry, so the Dirichlet condition is imposed on the true
curved boundary, not on a polygonal approximation. With $\wdata=0.1$ and $\wbc=10^2$, the $\rhov^2$
scaling makes the physics rows dimensionless and puts all three blocks on the same footing, which is
why one pair of weights serves every example in this paper.

With the inner functions fixed, these rows are \emph{linear} in the unknown coefficients, and they are explicit,
because differentiating a ridge function differentiates a univariate spline,
\begin{equation}
  \nabla\Kv=\frac1{\rhov}\sum_q\Phi_q'\,\ba_q,\qquad
  \Delta \Kv=\frac1{\rhov^{2}}\sum_q\Phi_q'' ,
  \label{eq:ridgeident}
\end{equation}
so that $\mathcal{L}=-\kappa\Delta+\mathbf V\cdot\nabla+c_0$ gives the row
\begin{equation}
  \rhov^{2}\,\mathcal{L}\Kv
  =-\kappa\sum_q\Phi''_q+\rhov\sum_q(\mathbf V\cdot\ba_q)\,\Phi'_q+c_0\rhov^{2}\sum_q\Phi_q .
  \label{eq:adrrow}
\end{equation}
Hence \eqref{eq:rows} is an ordinary linear least-squares problem, solved in closed form
(divide-and-conquer SVD: the least-squares matrix is rank-deficient by construction, and the redundancy of the ridge
representation is resolved by the minimum-norm solution; the normal equations are then singular, and regularizing
them biases the fit) and \emph{batched over all patches at once}: no optimizer, learning rate,
initialization, local minimum or back-propagation through the differential operator. Fitting all
$\prod_{k=1}^{d}(n_k+1)$ local models of a mesh ($n_k$ elements in direction $k$) is one call to a
batched least-squares routine.

\subsection{Blending}\label{sec:method:blend}

The local models are combined with the partition of unity \eqref{eq:pou},
\begin{equation*}
  \boxed{\;\uhyb(\bx)=\sum_{v\in\mathcal{V}}\Nv\bigl(\bF^{-1}(\bx)\bigr)\,\Kv(\bx),\qquad
  \nabla\uhyb=\sum_v\bigl(\Kv\nabla\Nv+\Nv\nabla\Kv\bigr).}
\end{equation*}
If every local model is a constant, $\Kv\equiv c_v$, the blend is the piecewise multilinear spline $\sum_vc_v\Nv$
with nodal values $c_v$: the method replaces each nodal value by a local function. If every local model returns the
isogeometric solution, $\Kv=\uh$, the partition of unity \eqref{eq:pou} gives $\sum_v\Nv\uh=\uh$, which makes the
fallback of Section~\ref{sec:method:safeguard} exact rather than approximate.
Where two parametric sides describe the same physical points (a periodic seam) or one collapses to a
point (a polar center), the hat functions are \emph{identified}: those patches share one local model,
fitted on the union of their supports and accepted or not as a unit, so the field is single-valued
at a collapsed side and continuous across a seam, as the isogeometric solution is
(on the flower of Section~\ref{sec:flower} the measured jumps are at the level of round-off).

\subsection{The \emph{a posteriori} safeguard}\label{sec:method:safeguard}

A smooth local model \eqref{eq:kan} need not represent the solution on its patch, for instance at a
singularity, on a coarse mesh or in a boundary layer, and an uncontrolled post-process would then damage an
otherwise good isogeometric solution. Four tests decide the question from computable quantities alone.
For each patch let
\begin{equation}
  \delta_v=\bigl\|\Kv-\uh\bigr\|_{\operatorname{supp}\Nv},\qquad
  s_v=\bigl\|\Kv-\uhyb\bigr\|_{\operatorname{supp}\Nv},\qquad
  r_v=\bigl\|\rhov^2(\mathcal{L}\Kv-f)\bigr\|_{\omega_v},
  \label{eq:safequant}
\end{equation}
all root mean squares over points: $\delta_v$ and $s_v$ over a $3\times3$ grid of check points in each element of
$\operatorname{supp}\Nv$ ($2\times2\times2$ in three dimensions), $r_v$ and the difference in \eqref{eq:sg-d} over
the Gauss points of the fitting patch $\omega_v$. The three quantities are the correction the patch proposes, its
disagreement with the blend of its neighbors, and the residual the fitted model leaves. During the tests $\uhyb$ is
the blend of the current local models of all patches, accepted or not, and it includes $\Kv$ itself, weighted by
$\Nv$, so $s_v$ under-estimates the leave-one-out disagreement and the test is biased toward acceptance. With $\bar u$ the root mean square of $\uh$, a patch is
\emph{accepted} when
\begin{align}
  \text{(a) consensus:}&\quad s_v<\tau\,\delta_v, &&\tau=\tfrac12,\label{eq:sg-a}\\
  \text{(b) size:}&\quad \delta_v<\tau_\delta\,\bar u, &&\tau_\delta=0.3,\label{eq:sg-b}\\
  \text{(c) residual:}&\quad C_v\,r_v<\delta_v, &&\label{eq:sg-c}\\
  \text{(d) $p$-consistency (only where $f\equiv0$ on $\omega_v$):}&
  \quad \bigl\|\Kv-\Kv^{\mathrm{lo}}\bigr\|_{\omega_v}<\tau_p\,\delta_v,
  &&\tau_p=\tfrac15 .\label{eq:sg-d}
\end{align}
Test (a) is the heart of the safeguard: neighboring patches overlap, so a model that has found the
solution agrees with its neighbors \emph{better} than any of them agrees with the isogeometric data,
whereas one that has invented structure does not; (b) excludes corrections so large that the local
model, and not the solver, is the more likely source of the discrepancy. In (c), $C_v$ turns the residual into an error
indicator. It is taken from the maximum-principle bound of Proposition~\ref{prop:local}
(Section~\ref{sec:mechanism}), which controls the error of a local model by its residual and its error on the
boundary of the patch: for $\mathcal{L}=-\kappa\Delta+\mathbf V\cdot\nabla+c_0$ with $c_0\ge0$ in two dimensions the
barrier $w=(\rhov^2-|\bx-\xv|^2)/(4\kappa)$, together with the interior-extremum bound
$\|\mathcal{L}\Kv-f\|_\infty/c_0$ for $c_0>0$, leads to
\begin{equation}
  C_v=\min\left\{\frac{1}{\kappa\beta_v},\ \frac{4}{c_0\rhov^{2}}\right\},\qquad
  \beta_v=1-\frac{|\mathbf V|\rhov}{2\kappa},
  \label{eq:stab}
\end{equation}
where the first entry is replaced by $+\infty$ if $\beta_v\le0$ and the second if $c_0=0$ (for the variable reaction
coefficient of Section~\ref{sec:fk}, $c_0$ is its minimum over the sample points of the patch), in the units of the
$\rhov^{2}$-scaled residual of \eqref{eq:safequant}: four times the constants of Proposition~\ref{prop:local},
normalized so that $C_v=1$ for $-\Delta$. Because it drops the boundary term, replaces the
supremum by a root mean square and uses the two-dimensional barrier constant also in three dimensions, where the
constant of Proposition~\ref{prop:local} is smaller (for the Poisson problem of Section~\ref{sec:thickring}, the only
three-dimensional benchmark, \eqref{eq:stab} gives $C_v=1$), $C_vr_v$ is a computable indicator calibrated on the
bound
\eqref{eq:local-bound} rather than the bound itself; for the space--time heat operator (Section~\ref{sec:heat}) the
test is likewise used with $C_v=1$, a normalization of the scaled residual that is not derived from the parabolic
counterpart of Proposition~\ref{prop:local}. Section~\ref{sec:mechanism:bounds} relates tests (a) and (c) to
the two terms of that bound. Test (d) exists because (c) is blind where $f\equiv0$: every polynomial solution of the homogeneous equation (a harmonic polynomial for the
Laplacian) has zero residual, so a smooth model under-fitting a corner singularity passes (c). It is
applied only there, since where $f\not\equiv0$ the decision rests on (a)--(c), and refits the patch
with a smaller class ($\dloc=7$, $Q=10$), the refit being the $\Kv^{\mathrm{lo}}$ of \eqref{eq:sg-d}: an adequate class gives the same answer up to a fraction
$\tau_p$ of the correction it claims, an inadequate one does not. None of the benchmarks of
Section~\ref{sec:examples} has a singularity. Test (d) is active in two of them, the flower and the heat equation,
where $f\equiv0$; on the flower, where its effect was measured, it is conservative: it excludes patches that pass
(a)--(c) and lowers the gain (Section~\ref{sec:flower}).

A patch failing any test is refitted on a smaller support, $\ell=1\to\tfrac12\to0$, and if it still
fails is set to $\Kv:=\uh$; the accepted set is then re-checked against the \emph{final}
blend in at most three passes, because excluding a patch changes its neighbors' consensus (Algorithm~\ref{alg:hybrid}). The
procedure uses no exact solution and no reference computation, and costs one to three batched
least-squares solves, or twice that where the $p$-consistency test \eqref{eq:sg-d} is active.
Figure~\ref{fig:method_pipeline} draws the procedure.

\begin{figure}[!t]
\centering
\includegraphics[width=\linewidth]{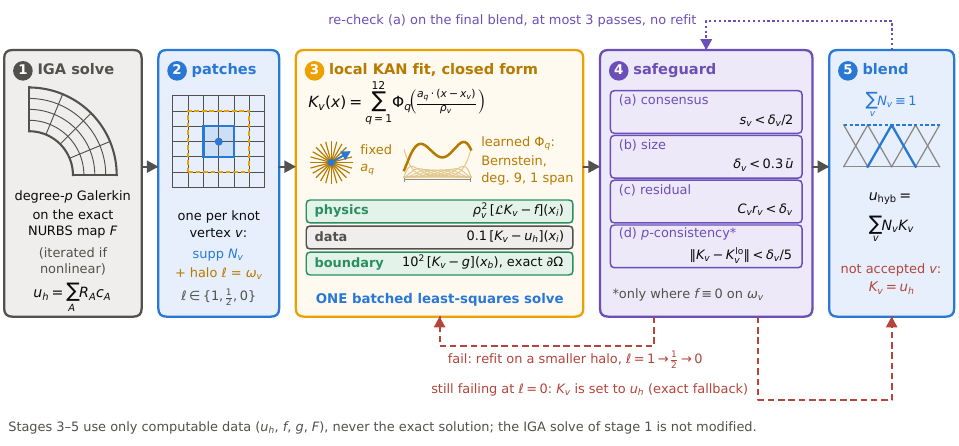}
\caption{The hybrid as five stages (the steps are listed in Algorithm~\ref{alg:hybrid}): (1) the isogeometric solve; (2) one fitting patch
$\omega_v$ per knot vertex, the support of the hat $\Nv$ enlarged by a halo of $\ell$ element widths; (3) the local
model \eqref{eq:kan} with fixed directions and Bernstein outer functions \eqref{eq:bernstein}, fitted to the three
row blocks \eqref{eq:rows} by one batched least-squares solve; (4) the four tests
\eqref{eq:sg-a}--\eqref{eq:sg-d}; (5) the partition-of-unity blend. Red: a patch failing a test is refitted on a
smaller halo and, if it still fails at $\ell=0$, set to $\Kv:=\uh$. Violet: test (a) is re-checked against the
final blend.}
\label{fig:method_pipeline}
\end{figure}

\begin{table}[!t]
\centering
\makeatletter
\refstepcounter{algocf}\label{alg:hybrid}%
\@makecaption{Algorithm~\thealgocf}{Hybrid IGA--KAN super-resolution with the
\emph{a posteriori} safeguard. \textbf{Input:} NURBS patch $\bF$, data $f,g$, degree $p$, mesh;
\textbf{output:} $\uhyb$. Steps 3--9 are the halo loop, run at $\ell=1$, then $\tfrac12$, then
$0$, each pass touching only the patches not yet accepted; the tests (a)--(d) are
\eqref{eq:sg-a}--\eqref{eq:sg-d}.}
\makeatother
\footnotesize
\setlength{\tabcolsep}{4pt}
\renewcommand{\arraystretch}{1.05}
\setlength{\fboxsep}{5pt}
\fbox{%
\begin{minipage}{\dimexpr\linewidth-2\fboxsep-2\fboxrule\relax}%
\begin{tabular}{@{}>{\centering\arraybackslash}p{10pt}>{\raggedright\arraybackslash}p{\dimexpr0.60\dimexpr\linewidth-26pt\relax\relax}>{\raggedright\arraybackslash}p{\dimexpr0.40\dimexpr\linewidth-26pt\relax\relax}@{}}
\multicolumn{3}{@{}c@{}}{\textbf{classical IGA solve $\;\to\;$ local KAN fit $\;\to\;$ four tests $\;\to\;$ partition-of-unity blend}}\\[1pt]
\toprule
\# & what is computed & cost, or what it produces\\
\midrule
\rowcolor{blue!8}\multicolumn{3}{@{}l@{}}{\textbf{Phase 1: the classical solve, unchanged}}\\
1 & $\uh\leftarrow$ \mbox{degree-$p$} Galerkin IGA solve on the exact NURBS geometry $\bF$
  & the baseline; never discarded\\
2 & $\mathcal{T}\leftarrow\emptyset$, the set of accepted vertex patches
  & bookkeeping\\
\rowcolor{green!10}\multicolumn{3}{@{}l@{}}{\textbf{Phase 2, halo loop:} \textbf{for} $\ell\in\{1,\tfrac12,0\}$ \textbf{do} (steps 3--9)}\\
3 & fit every $\Kv$, $v\notin\mathcal{T}$, on its fitting patch $\omega_v$, the support of $\Nv$ with a halo of
    $\ell$ element widths: rows \eqref{eq:rows}, linear by \eqref{eq:ridgeident}
  & \emph{one} batched linear least squares for all patches, $Q(\dloc+1)$ coefficients each ($120$ in two
    dimensions, $405$ in three): no optimizer, no learning rate, no local minimum\\
4 & $\delta_v=\|\Kv-\uh\|$, $s_v=\|\Kv-\uhyb\|$ over $\operatorname{supp}\Nv$ and
    $r_v=\|\rhov^2(\mathcal{L}\Kv-f)\|$ over $\omega_v$ \eqref{eq:safequant}
  & root mean squares at check points and at the Gauss points of the patch; $\uhyb$ is here the blend of all
    current local models; no exact solution enters\\
\rowcolor{orange!8}
5 & test (a) \emph{consensus} \eqref{eq:sg-a}: $s_v<\tau\,\delta_v$, $\tau=\tfrac12$
  & excludes a model that disagrees with its neighbors\\
\rowcolor{orange!8}
6 & test (b) \emph{size} \eqref{eq:sg-b}: $\delta_v<\tau_\delta\,\bar u$, $\tau_\delta=0.3$
  & excludes a correction too large to blame on the solver\\
\rowcolor{orange!8}
7 & test (c) \emph{residual} \eqref{eq:sg-c}: $C_v\,r_v<\delta_v$, with $C_v$ from \eqref{eq:stab}
    ($C_v=1$ in three dimensions and for the heat operator)
  & a computable indicator modeled on the residual term of \eqref{eq:local-bound}\\
\rowcolor{orange!8}
8 & test (d) \emph{$p$-consistency} \eqref{eq:sg-d}, only where $f\equiv0$ on $\omega_v$:
    $\|\Kv-\Kv^{\mathrm{lo}}\|_{\omega_v}<\tau_p\,\delta_v$, $\tau_p=\tfrac15$
  & a refit in the smaller class $\dloc=7$, $Q=10$: one extra batched solve\\
9 & $\mathcal{T}\leftarrow\mathcal{T}\cup\{v:\ \text{(a)--(c) hold, and (d) where applied}\}$;
    the rest go to the next $\ell$
  & \textbf{end for}; the loop stops after $\ell=0$\\
\rowcolor{red!7}\multicolumn{3}{@{}l@{}}{\textbf{Phase 3: fallback}}\\
10 & $\Kv:=\uh$ for every $v\notin\mathcal{T}$
  & $\sum_v\Nv\equiv1$, so where no patch through a point is accepted the blend is exactly $\uh$: an
    exact fallback\\
\rowcolor{violet!8}\multicolumn{3}{@{}l@{}}{\textbf{Phase 4: re-check, at most three passes}}\\
11 & recompute $s_v$ against the blend in which every patch not accepted is $\uh$ and drop from $\mathcal{T}$ every $v$
     failing (a) \eqref{eq:sg-a}; stop when a pass removes nothing
  & no refit: excluding a patch changes its neighbors' consensus\\
\rowcolor{gray!12}\multicolumn{3}{@{}l@{}}{\textbf{Output}}\\
12 & $\uhyb=\sum_v\Nv(\bF^{-1}(\bx))\,\Kv(\bx)$,
     $\nabla\uhyb=\sum_v(\Kv\nabla\Nv+\Nv\nabla\Kv)$ (Section~\ref{sec:method:blend})
   & one to three batched solves in all, twice that where (d) is active; the IGA solve is
     untouched\\
\midrule
\multicolumn{3}{@{}p{\linewidth}@{}}{\textbf{Key point.} Steps 4--11 use only $\uh$, $\Kv$
and the data, never an exact solution or a reference computation. A patch still failing at $\ell=0$
returns $\uh$.}\\
\bottomrule
\end{tabular}%
\end{minipage}}
\end{table}

\subsection{Operators and problem classes}\label{sec:method:operators}

Every row of \eqref{eq:rows} is linear in the coefficients for each of the four classes of operators in the
benchmarks of Section~\ref{sec:examples}, which is what makes the closed-form fit possible in all of them:
\begin{itemize}
\item \emph{Second-order elliptic operators}, $-\kappa\Delta+\mathbf V\cdot\nabla+c_0$ with constant coefficients,
  give the row \eqref{eq:adrrow} (the Poisson and Laplace benchmarks).
\item \emph{Parabolic operators} are treated in space--time form, the last coordinate being time, so $\partial_t$ is
  one more ridge derivative in \eqref{eq:ridgeident}. Time is not stepped: the space--time cylinder is one B-spline
  patch, post-processed in space and time alike (Section~\ref{sec:heat}). Under a time-stepping scheme
  (Sections~\ref{sec:fk} and~\ref{sec:adr}) the local models are instead fitted at each output time on the elliptic
  operator of that level.
\item \emph{Nonlinear terms} are frozen at the classical solution, so that the rows stay linear in $\Kv$: the Bratu
  source becomes the given function $\lambda e^{\uh}$ (Section~\ref{sec:bratu}), and the logistic reaction of the
  Fisher--Kolmogorov equation is linearized by Newton's rule about the discrete solution $\ch$
  (Section~\ref{sec:fk}).
\item \emph{Systems} are super-resolved in the order of their coupling: in Section~\ref{sec:adr} the second field is
  post-processed first and its super-resolved values enter the source term of the first.
\end{itemize}

The identities \eqref{eq:ridgeident} extend to fourth-order operators, through
$\Delta^2\Kv=\rhov^{-4}\sum_q\Phi_q''''$, and, component by component, to linear elasticity. Neither is examined
in this paper: the residual test \eqref{eq:sg-c} rests on a maximum principle, which is not available for
fourth-order operators or for systems such as elasticity, so the safeguard of Section~\ref{sec:method:safeguard} does
not carry over as it stands; counterparts of tests (c) and (d) for these classes are left to future work
(Section~\ref{sec:conclusions}).

\subsection{Cost}\label{sec:method:cost}

The isogeometric solve is unchanged. Per patch the post-process assembles a matrix with $2m+n_b$ rows
($n_b\le 2d\,n_s^{d-1}$ boundary rows) and $Q(\dloc+1)$ columns and solves one least-squares problem, batched over patches
and without sparse factorization; evaluating $\uhyb$ needs the $2^d$ local models whose hats do not
vanish at the point. The fit costs from a fraction of a second to over a minute where the isogeometric solve costs milliseconds to
seconds; solve and fit together take $4$ to about $500$ times the time of the solve on the reference meshes of
Section~\ref{sec:examples} (from $2.1$ against $0.49$~s on the Fisher--Kolmogorov equation to $19.6$ against
$0.039$~s on the flower). What the hybrid offers is therefore accuracy per unknown of the Galerkin solve, above
all in $\Hone$; where only $\Ltwo$ accuracy at the least computing time is wanted, mesh refinement can be the cheaper
route: on the quarter annulus, IGA on $50\times50$ elements matches the $\Ltwo$ error of the hybrid on $16\times16$
in $0.036$~s, against $1.17$~s for the hybrid in the same timing run ($1.18$~s in Table~\ref{tab:ex1}). The fits of different patches
are independent of each other, so the post-process is suited to parallel and GPU execution
(Section~\ref{sec:conclusions}).

%% file: sections/mechanism.tex
\section{Analysis: why a smooth local model can improve the Galerkin solution it is fitted to}\label{sec:mechanism}

The error of the Galerkin solution is orthogonal to the discrete space in the sense of the bilinear form (for a
symmetric coercive problem the solution is the best approximation in the energy norm), so it oscillates on the
element scale, while the \emph{local averages} of $\uh$ are far more accurate: for smooth data a duality argument gives the
negative-norm estimates $\|u-\uh\|_{H^{-s}(\Omega)}\le C\,h^{\,p+1+s}\|u\|_{H^{p+1}(\Omega)}$ for $0\le s\le p-1$,
provided the adjoint problem is $H^{s+2}$-regular, and the Bramble--Schatz post-processes exploit this to recover a
higher order by convolution with a compactly supported kernel \cite{bramble1977higher}.

The post-process is a local average whose kernel is \emph{determined by the equation}.
Of the three blocks of \eqref{eq:rows}, the data rows tie $\Kv$ to $\uh$ only weakly ($\wdata=0.1$): a
least-squares fit of a smooth model to oscillatory data keeps its local averages and discards the
oscillation, so $\Kv$ inherits only the accurate low-frequency content. The physics rows restore the
discarded fine structure (among smooth functions near those averages, $\Kv$ satisfies
$\mathcal{L}\Kv=f$ best on the patch), and the boundary rows fix the remaining freedom where the patch
meets $\partial\Omega$, on the exact geometry. The gain comes from the fact that
$\mathbb{P}_{\dloc}$ \emph{cannot} follow an element-scale oscillation while satisfying the equation; a
model that could would merely reproduce $\uh$. The measured convergence rates are consistent with this picture at
$p=2$: on the two finest mesh pairs the $\Ltwo$ error of the hybrid converges at rates $3.9$ and $4.0$ on the
Fisher--Kolmogorov test on the unit square (Section~\ref{sec:fk}) and $3.8$ and $3.7$ on the quarter annulus
(Section~\ref{sec:poisson}), against $3.0$ for IGA. At $p=3$ the hybrid does not converge faster than IGA on these
mesh pairs ($4.2$ and $3.7$ against $4.3$ and $4.1$ on the quarter annulus, $2.6$ and $3.5$ against $4.0$ on the
Fisher--Kolmogorov test), and the gain there is a factor of $9.7$ to $39$.

\subsection{Error bounds for the blended solution}\label{sec:mechanism:bounds}

Two elementary results bound the error of the hybrid by quantities attached to the local models. The first
concerns the blend: because the hats are non-negative and sum to one, the blend is at every point at least as
accurate as the least accurate of the local models that meet there. It is a pointwise form of the
partition-of-unity estimate of Melenk and Babu\v{s}ka \cite{melenk1996partition}, in which the term carrying the
gradients of the hats involves the disagreement of the local models instead of their errors.

\begin{proposition}[blend]\label{prop:blend}
Let $\uhyb=\sum_v\Nv\Kv$ with $\Nv\ge0$ and $\sum_v\Nv\equiv1$, the hats being read as functions of $\bx$ through
$\bF^{-1}$, let $D_v=\operatorname{supp}(\Nv\circ\bF^{-1})$ and $\mathcal{V}(\bx)=\{v:\bx\in D_v\}$. Then, at every
$\bx$ and, for the gradient, at every $\bx$ in the interior of an element,
\begin{equation*}
  |u-\uhyb|\le\max_{v\in\mathcal{V}(\bx)}|u-\Kv|,\qquad
  |\nabla(u-\uhyb)|\le\max_{v\in\mathcal{V}(\bx)}|\nabla(u-\Kv)|
  +\Bigl(\sum_v|\nabla\Nv|\Bigr)\max_{v,w\in\mathcal{V}(\bx)}\tfrac12\bigl|\Kv-K_w\bigr| .
\end{equation*}
\end{proposition}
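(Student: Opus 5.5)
The plan is to use the partition of unity \eqref{eq:pou} to rewrite the error as a convex combination. Then both bounds follow pointwise, with no integration at all.

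\emph{Value bound.} Fix $\bx$. Only hats with $v\in\mathcal{V}(\bx)$ can be nonzero at $\bx$, since $\Nv(\bF^{-1}(\bx))=0$ for $\bx\notin D_v$. Because $\sum_v\Nv\equiv1$, we can write
\[
u-\uhyb=\sum_{v\in\mathcal{V}(\bx)}\Nv\,(u-\Kv)
\]
at $\bx$. The weights are non-negative and sum to one, so the triangle inequality gives $|u-\uhyb|\le\sum_v\Nv|u-\Kv|\le\max_{v\in\mathcal{V}(\bx)}|u-\Kv|$.

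\emph{Gradient bound.} Take $\bx$ in the interior of an element, where every $\Nv\circ\bF^{-1}$ is smooth. Since $\sum_v\Nv\equiv1$, differentiating gives $\sum_v\nabla\Nv=0$. The product rule then yields
\[
\nabla(u-\uhyb)=\sum_v\Nv\nabla(u-\Kv)-\sum_v\Kv\nabla\Nv .
\]
The first sum is a convex combination and is bounded by $\max_{v\in\mathcal{V}(\bx)}|\nabla(u-\Kv)|$, as in the value bound.

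\emph{Main step.} The key step is to rewrite the second sum in terms of disagreements rather than errors. Let $M$ and $m$ be the maximum and minimum of $\Kv(\bx)$ over $v\in\mathcal{V}(\bx)$, and set $c=(M+m)/2$. Since $\sum_v\nabla\Nv=0$,
\[
\sum_v\Kv\nabla\Nv=\sum_{v\in\mathcal{V}(\bx)}(\Kv-c)\nabla\Nv .
\]
Here the sum may be restricted to $\mathcal{V}(\bx)$ because $\nabla\Nv$ vanishes at interior points outside $D_v$. Each factor satisfies $|\Kv-c|\le(M-m)/2=\max_{v,w\in\mathcal{V}(\bx)}\tfrac12|\Kv-K_w|$. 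Summing gives the second term of the bound, $\bigl(\sum_v|\nabla\Nv|\bigr)\max_{v,w}\tfrac12|\Kv-K_w|$, and the triangle inequality completes the estimate.

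The only delicate point is the choice of the midpoint $c$, which is what produces the factor $\tfrac12$. The restriction to element interiors is needed because the hats are only $C^0$ across element boundaries and $\nabla\Nv$ is not defined there. The gradients are taken in $\bx$, through the chain rule with $\bF^{-1}$, which is smooth inside each element. That is all the regularity the argument requires.
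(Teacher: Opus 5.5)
Your proof is correct and follows the paper's argument. The partition of unity gives the value bound, and $\sum_v\nabla\Nv=0$ lets you subtract the mid-range at $\bx$ in the gradient term. The only cosmetic difference is that the paper writes that term as $\sum_v\nabla\Nv\,(u-\Kv)$ and centers the errors $(u-\Kv)(\bx)$. You write it as $-\sum_v\Kv\nabla\Nv$ and center the values $\Kv(\bx)$, which is the same thing because $u(\bx)$ is a common shift.
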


\begin{proof}
$u-\uhyb=\sum_v\Nv\,(u-\Kv)$ by the partition of unity, which gives the first bound since the $\Nv$ are
non-negative weights. Differentiating,
$\nabla(u-\uhyb)=\sum_v\Nv\nabla(u-\Kv)+\sum_v\nabla\Nv\,(u-\Kv)$, and because $\sum_v\nabla\Nv=0$ the
second sum is unchanged when any fixed function of $\bx$ is subtracted from every $(u-\Kv)$; taking the
mid-range of the errors $(u-\Kv)(\bx)$, $v\in\mathcal{V}(\bx)$, gives the stated form.
\end{proof}

The second concerns one local model: its error is controlled by its residual, which is computable, and by its
error on the boundary of the patch. It is the classical \emph{a priori} bound of the maximum principle
\cite[Thm.~3.7]{gilbarg2001elliptic}, made explicit on a small patch by a quadratic barrier; its explicit constants
fix $C_v$ in \eqref{eq:stab}.

\begin{proposition}[local model]\label{prop:local}
Let $D\subset\Omega$ be open and contained in the ball of radius $\rho$ about a point $\xv$, let
$\mathcal{L}=-\kappa\Delta+\mathbf V\cdot\nabla+c_0$ with constants $\kappa>0$ and $\mathbf V\in\mathbb{R}^d$ and a
reaction coefficient $c_0\in C(\overline D)$, $c_0\ge0$ (a constant or a function of $\bx$), and let
$u\in C^2(D)\cap C(\overline D)$ satisfy $\mathcal{L}u=f$ in $D$. If
$\beta:=1-|\mathbf V|\rho/(d\kappa)>0$, then every $K\in C^2(D)\cap C(\overline D)$ satisfies
\begin{equation}\label{eq:local-bound}
  \max_{\overline D}|u-K|\;\le\;\max_{\partial D}|u-K|
  +\frac{\rho^{2}}{2d\,\kappa\,\beta}\,\sup_{D}\bigl|\mathcal{L}K-f\bigr| .
\end{equation}
If $\underline c_0:=\inf_D c_0>0$, then, whatever the sign of $\beta$, every such $K$ also satisfies
\begin{equation*}
  \max_{\overline D}|u-K|\;\le\;\max\Bigl\{\max_{\partial D}|u-K|,\;\underline c_0^{-1}\sup_{D}\bigl|\mathcal{L}K-f\bigr|\Bigr\}.
\end{equation*}
\end{proposition}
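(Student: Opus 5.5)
We apply the weak maximum principle to the error $e=u-K$, which satisfies $\mathcal{L}e=f-\mathcal{L}K=:-r$ in $D$ with $|r|\le R:=\sup_D|\mathcal{L}K-f|$. For the first bound we compare $\pm e$ with the barrier
\[
\psi(\bx)=M+\frac{R}{2d\kappa\beta}\bigl(\rho^2-|\bx-\xv|^2\bigr),\qquad M:=\max_{\partial D}|e| .
\]
For the second bound we use an interior-extremum argument that exploits $c_0>0$ directly. We may assume $R<\infty$ and $D$ bounded (it lies in the ball), so $\overline D$ is compact and $e$ attains its extrema there.

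\emph{First bound.} The step is to compute $\mathcal{L}\psi$. With $w=\rho^2-|\bx-\xv|^2$ we have $-\kappa\Delta w=2d\kappa$ and $\mathbf V\cdot\nabla w=-2\mathbf V\cdot(\bx-\xv)\ge-2|\mathbf V|\rho$ on $D$. Since $c_0\psi\ge0$ (as $w\ge0$ on $D$ and $M\ge0$), this gives
\[
\mathcal{L}\psi\ge\frac{R}{2d\kappa\beta}\bigl(2d\kappa-2|\mathbf V|\rho\bigr)=\frac{R}{\beta}\Bigl(1-\frac{|\mathbf V|\rho}{d\kappa}\Bigr)=R .
\]
This is exactly where the definition $\beta=1-|\mathbf V|\rho/(d\kappa)$ enters. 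Set $\phi=e-\psi$. Then $\mathcal{L}\phi\le -r-R\le0$ in $D$ and $\phi\le0$ on $\partial D$ (there $\psi\ge M\ge|e|$).

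We claim $\phi\le0$ on $\overline D$. Suppose instead that $\phi$ has a positive maximum at an interior point $\bx_0$. Then $\nabla\phi(\bx_0)=0$, $\Delta\phi(\bx_0)\le0$ and $c_0\phi(\bx_0)\ge0$, so $\mathcal{L}\phi(\bx_0)\ge0$. This is consistent with $\mathcal{L}\phi\le0$, so there is no contradiction yet; we only have a non-strict inequality. The standard remedy, and the main technical obstacle, is to perturb the barrier so that the inequality becomes strict. We replace $R$ by $R+\varepsilon$ in $\psi$. Then $\mathcal{L}\phi\le-\varepsilon<0$ in $D$, and at a positive interior maximum we would have $\mathcal{L}\phi(\bx_0)\ge0$, a contradiction. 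Hence $\phi\le0$ on $\overline D$, and letting $\varepsilon\to0$ removes the perturbation.

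The same argument applied to $-e$ gives $|e|\le\psi$ on $\overline D$. Using $w\le\rho^2$ yields \eqref{eq:local-bound}. Only $C^2(D)\cap C(\overline D)$ regularity is used: derivatives are needed only at interior points, and continuity up to the boundary gives attainment of the maximum. This is the content of \cite[Thm.~3.7]{gilbarg2001elliptic}, made explicit by the barrier.

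\emph{Second bound.} Let $\bx_0\in\overline D$ be a point where $|e|$ attains its maximum. If $\bx_0\in\partial D$, the bound holds trivially. Otherwise, suppose without loss of generality that $e(\bx_0)>0$ is a positive interior maximum of $e$. Then $\nabla e(\bx_0)=0$ and $-\kappa\Delta e(\bx_0)\ge0$, so
\[
c_0(\bx_0)\,e(\bx_0)\le\mathcal{L}e(\bx_0)=-r(\bx_0)\le R ,
\]
and therefore $e(\bx_0)\le R/\underline c_0$. A negative interior minimum is handled symmetrically. No sign condition on $\beta$ is used here, because the drift term vanishes at the extremum.
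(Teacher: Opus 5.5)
Your proof is correct and follows the paper's. You use the same quadratic barrier $(\rho^2-|\bx-\xv|^2)R/(2d\kappa\beta)$, whose image under $\mathcal{L}$ is at least $R$, compare $\pm e$ with this barrier plus the boundary maximum, and give the same interior-extremum argument for the second bound. The one difference is in the comparison step: the paper cites the weak maximum principle \cite[Cor.~3.2]{gilbarg2001elliptic}, whereas you prove it inline by replacing $R$ with $R+\varepsilon$, which works because $\beta>0$ makes $\mathcal{L}$ of the barrier strictly positive.
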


\begin{proof}
Put $e=K-u$, $r=\mathcal{L}K-f=\mathcal{L}e$, $R=\sup_D|r|$ and $E=\max_{\partial D}|e|$. The function
$w=(\rho^2-|\bx-\xv|^2)\,R/(2d\kappa\beta)$ is non-negative on $\overline D$ and
\[
  \mathcal{L}\bigl(\rho^2-|\bx-\xv|^2\bigr)=2d\kappa-2\,\mathbf V\cdot(\bx-\xv)+c_0\bigl(\rho^2-|\bx-\xv|^2\bigr)
  \;\ge\;2d\kappa-2|\mathbf V|\rho=2d\kappa\beta ,
\]
so $\mathcal{L}w\ge R$ in $D$. Hence $z_\pm=w+E\pm e$ satisfies $\mathcal{L}z_\pm=\mathcal{L}w+c_0E\pm r\ge0$ in $D$
and $z_\pm\ge0$ on $\partial D$, and the weak maximum principle for $\mathcal{L}$ with $c_0\ge0$
\cite[Cor.~3.2]{gilbarg2001elliptic} gives $z_\pm\ge0$ in $D$, that is, $|e|\le E+w\le E+\rho^2R/(2d\kappa\beta)$.
For the second bound, which does not use $\beta$, if $|e|$ attains its maximum over $\overline D$ at an interior point $\bx_0$, say with
$e(\bx_0)>0$, then $\nabla e(\bx_0)=0$ and $\Delta e(\bx_0)\le0$, so
$\underline c_0\,e(\bx_0)\le c_0(\bx_0)\,e(\bx_0)\le\mathcal{L}e(\bx_0)=r(\bx_0)\le R$; the case $e(\bx_0)<0$ is the
same for $-e$, and otherwise the maximum is attained on $\partial D$.
\end{proof}

Combining the two propositions on the interiors of the supports $D_v$, with $\bar\rho_v$ the
radius of $D_v$ about $\xv$ and $\bar\beta_v=1-|\mathbf V|\bar\rho_v/(d\kappa)>0$, gives for a solution
$u\in C^2(\Omega)\cap C(\overline\Omega)$ and every $\bx\in\Omega$
\begin{equation}\label{eq:hybrid-bound}
  |u-\uhyb|(\bx)\;\le\;\max\Bigl\{\;\max_{\substack{v\in\mathcal{T}\\ \bx\in D_v}}
  \Bigl[\max_{\partial D_v}|u-\Kv|+\frac{\bar\rho_v^{\,2}}{2d\,\kappa\,\bar\beta_v}\sup_{D_v}|\mathcal{L}\Kv-f|\Bigr],\;
  |u-\uh|(\bx)\Bigr\},
\end{equation}
where $\mathcal{T}$ is the set of accepted patches and the last term is present only if a patch through $\bx$ was
not accepted. The bound is for the linear operators of Proposition~\ref{prop:local} with the exact right-hand side. Where
the right-hand side of the rows is itself computed from the isogeometric solution, as for a frozen or linearized
nonlinear term, a semi-discrete time derivative or a super-resolved second field (Sections~\ref{sec:bratu},
\ref{sec:fk} and~\ref{sec:adr}), the rows contain an approximation $f_h$ of $f$; for the linearized reaction of
Section~\ref{sec:fk}, whose coefficient $c_0=\alpha(2\ch-1)$ is also taken from the isogeometric solution,
$\mathcal{L}$ is the operator of the rows and $f:=\mathcal{L}u$. Since
$\mathcal{L}(\Kv-u)=(\mathcal{L}\Kv-f_h)+(f_h-f)$, the bound then holds with
$\sup_{D_v}|\mathcal{L}\Kv-f_h|+\sup_{D_v}|f_h-f|$ in place of $\sup_{D_v}|\mathcal{L}\Kv-f|$, and the second
supremum depends on the error of $\uh$ and is not computable. An analogous bound holds for the space--time heat
operator $\partial_t-\kappa\Delta$ in $d_x$ space dimensions: the barrier
$(\bar\rho_v^{\,2}-|\mathbf y-\mathbf y_v|^2)/(2d_x\kappa)$ in the space variables $\mathbf y$ alone
($\mathbf y_v$ the space part of $\xv$), on which the operator equals one, and the weak maximum principle for
parabolic operators \cite[Ch.~3]{protter1984maximum} give \eqref{eq:hybrid-bound} with the parabolic boundary of $D_v$ in
place of $\partial D_v$ and the constant $\bar\rho_v^{\,2}/(2d_x\kappa)$ in front of the residual.

\paragraph{Relation to the safeguard} The bracket in \eqref{eq:hybrid-bound} has two terms, for which tests
\eqref{eq:sg-c} and \eqref{eq:sg-a} of Section~\ref{sec:method:safeguard} are computable surrogates; tests
\eqref{eq:sg-b} and \eqref{eq:sg-d} are not derived from the bound. The residual term is computable where the
right-hand side is exact data: $r_v$ in \eqref{eq:safequant} is the root mean square of the $\rhov^2$-scaled
residual at the sample points, and for the elliptic operators of Proposition~\ref{prop:local} in two dimensions
$C_v\rhov^{2}$, with $C_v$ from \eqref{eq:stab}, is four times the smaller of the two constants of the proposition,
evaluated with $\rho=\rhov$, an entry whose hypothesis fails ($\beta_v\le0$, or $c_0=0$) being counted as $+\infty$
as in \eqref{eq:stab} (in three dimensions and for the heat operator the test is used with $C_v=1$,
Section~\ref{sec:method:safeguard}). The boundary term is
known only on $\partial\Omega$, where $u=g$ and it is the misfit of the boundary rows; in the interior $u$ is
unknown, and the consensus test \eqref{eq:sg-a} is its surrogate, since on their overlap neighboring models are
different estimates of the same solution and their disagreement $s_v$ is the observable trace of their errors.
Proposition~\ref{prop:blend} shows why this quantity matters: in the gradient of the blend, the term amplified by
$\sum_v|\nabla\Nv|=O(1/h)$ depends not on the size of the local errors but only on their \emph{disagreement}. Because a root mean
square replaces the supremum and the consensus replaces the unknown boundary error, passing the tests does not by
itself guarantee that the right-hand side of \eqref{eq:hybrid-bound} is small: the safeguard is a computable filter
built on this bound, and its reliability is assessed on the benchmarks of Section~\ref{sec:examples}. The same
sensitivity suggests why the $\Hone$ gains usually exceed the $\Ltwo$ gains (a factor of $27$ against $5.4$ for $u$
in Section~\ref{sec:adr}): the isogeometric derivative carries the full oscillation; that of the blend, only the
residual disagreement.

\subsection{When the gain vanishes}\label{sec:mechanism:regimes}

The same reasoning identifies the regimes in which the post-process has nothing to gain.
\begin{enumerate}
\item \textbf{Unresolved oscillations.} A discrete solution with the right shape but the wrong phase, the
  typical error of a wave on a marginal mesh, has an error that is not oscillatory noise on an accurate
  average, and a phase-shifted wave satisfies the equation almost exactly, so the physics rows do not
  see it. Section~\ref{sec:heat} shows the transition: at $128$ elements for $25$ periods the safeguard accepts
  $266$ of the $1161$ patches and the hybrid reduces the error only by a factor of $1.28$, against $15.5$ at $256$
  elements, where it accepts $2264$ of $2313$.
\item \textbf{Coarse meshes.} When the patch is not small compared with the features of the solution, the tests
  cannot tell a good local model from a poor one: the safeguard declines, the hybrid returns $\uh$ and the gain is
  $1.00$, on the heat equation up to about three elements per wavelength (Fig.~\ref{fig:heat}f,g) and on the coupled
  system on $8\times8$ elements (Section~\ref{sec:adr}). On that mesh of the coupled system this decision is
  conservative at $p=2$ and $3$, where the unscreened fits would have reduced the $\Ltwo$ error by factors of $3.6$
  to $3.8$, and necessary at $p=4$, where they would have increased it by a factor of $2.4$.
\item \textbf{Non-smooth solutions.} At a re-entrant corner or a crack tip the isogeometric error is a genuine
  local defect, not an element-scale oscillation with accurate averages, and the singular part of the solution is
  not approximated to high order by $\mathbb{P}_{\dloc}$ on a patch that contains the singularity; the residual
  test \eqref{eq:sg-c} is also uninformative there when $f\equiv0$,
  which is the reason for the $p$-consistency test \eqref{eq:sg-d}. None of the benchmarks of
  Section~\ref{sec:examples} has a singularity, so the safeguard has been assessed on smooth solutions only and no
  claim is made here about its behavior at a singularity; enriching the local space for such solutions is left to
  future work (Section~\ref{sec:conclusions}).
\end{enumerate}
With the settings used throughout, the safeguarded hybrid did not return a larger error than the isogeometric
solution it post-processed, beyond round-off, on any mesh of the benchmarks of Section~\ref{sec:examples}, the $24$
three-dimensional runs with local degrees $6$, $8$ and $10$ included (Section~\ref{sec:thickring}).

%% file: sections/numerics.tex
\section{Numerical experiments}\label{sec:examples}

We demonstrate the IGA--KAN hybrid on eight benchmarks: three Poisson problems on curved NURBS domains, one of
them in three dimensions and one also posed as an inverse problem, the Laplace equation on a complex geometry, the nonlinear Bratu equation,
the Fisher--Kolmogorov equation of protein spreading, on the unit square and on a domain fitted to a brain slice
segmented from magnetic resonance images, a high-frequency heat equation and a stiff coupled advection--diffusion--reaction system. Five of them are taken from published work (the disk of PI-GGN
\cite{gao2022piggn}, the flower and the high-frequency heat equation of KINN \cite{wang2025kinn}, the
Fisher--Kolmogorov test of Corti et al.\ \cite{corti2023fk}, and the coupled system of Asmouh and Ostermann
\cite{asmouh2024nurbs}), so that the hybrid can be compared with published numbers where they exist (PI-GGN, KINN,
PG-KINN); the thick ring is the three-dimensional geometry of GeoPDEs \cite{defalco2011}.

The methods compared are the following. \emph{IGA} is the classical isogeometric Galerkin solve $\uh$ on
the exact NURBS geometry \cite{hughes2005isogeometric}. \emph{IGA--KAN} is the hybrid: the same solve, followed on
every knot-vertex patch by one small Kolmogorov--Arnold ridge model fitted in closed form to the strong form of
the equation, to the exact boundary data and to $\uh$, the local models being blended by the hat functions and
screened by an \emph{a posteriori} safeguard. \emph{KAN} is a stand-alone physics-informed Kolmogorov--Arnold
network \cite{liu2024kan} trained on the strong form \cite{raissi2019physics}, $[2,10,10,1]$ with grid $6$, and
\emph{KINN} is the same network built from the layer of KINN \cite{wang2025kinn}; both are trained by Adam followed
by L-BFGS, $1000+200$ iterations, or $2000+400$ on the disk and, for the KAN, on the thick ring. In all these runs the boundary and initial
data are imposed by a penalty of weight $100$, and Adam starts from the learning rate $5\times10^{-3}$, decayed to
$10^{-4}$; these settings were not adjusted to the individual problems. The one exception is the heat equation, on which all networks follow KINN's own protocol
(Section~\ref{sec:heat}). KINN's layer is run on the quarter
annulus, the disk, the Bratu equation, the Fisher--Kolmogorov test on the unit square, the heat equation and the
coupled system; on the thick ring, the flower and the brain slice only the KAN is reported, and on the flower the
errors of KINN are the published ones. The hybrid uses the same settings in
every two-dimensional example: $12$ ridge directions, local degree $9$, $12$ Gauss points per direction per
patch; the three-dimensional example uses $45$ directions, degree $8$ and $7$ points.

\paragraph{Scope of the comparison with the networks} The networks and the hybrid do not solve the same task: each
network solves the equation from scratch, without access to $\uh$, whereas the hybrid post-processes an isogeometric
solution. The network rows of the tables therefore show what a Kolmogorov--Arnold network trained by a fixed
physics-informed protocol delivers on these equations; they do not show the best accuracy these architectures can
reach with tuning to the problem, longer training or adaptive sampling. Within this scope, the comparison is set up
and reported as follows. First, on every benchmark on which we train both, KAN and KINN share the widths, the grid,
the loss, the optimizer and the budget, and differ only in the layer, whose per-edge spline scale gives KINN up to
$10\%$ more parameters. Second, the KINN entries computed here are the median of three training seeds, and so are
the KAN entries on the disk, the flower and the Fisher--Kolmogorov and heat equations and the perceptron of the heat
equation; on the quarter annulus, the thick ring, the Bratu equation and the coupled system the KAN entry is a
single seed; IGA and IGA--KAN are deterministic. Third, where a network has not resolved the solution within the
budget, this is stated with the result: on the Fisher--Kolmogorov equation neither network has converged at the end
of training (Section~\ref{sec:fk}); on the heat equation the perceptron and the KAN built from our layer return
$u\approx0$, and the hybrid is compared with the KAN built from KINN's layer (Section~\ref{sec:heat}); on the
coupled system both networks remain at errors of order $0.4$ (Section~\ref{sec:adr}). Fourth, published results are
quoted as reported, each in its own error measure. Fifth, the factors over the networks quoted in
Sections~\ref{sec:intro} and~\ref{sec:conclusions} are taken against the more accurate network on each benchmark.
They range from $6$ on the disk with $25$ control points, where both networks are $13$ to $15$ times more accurate
than IGA in $\Ltwo$, to $6\times10^{4}$ on the coupled system; against the KAN alone on the Fisher--Kolmogorov
square the factor would be $2\times10^{5}$.

The sizes in the tables are not matched: those of the networks count trainable parameters, those of IGA and
IGA--KAN the Galerkin unknowns, to which the post-process adds $120$ local coefficients per knot vertex ($405$ in
three dimensions), each set determined by its own least-squares problem. The like-for-like comparisons of the hybrid
are therefore with IGA at the same number of Galerkin unknowns, which is its input, on every benchmark (at equal
computing time, mesh refinement can be cheaper in $\Ltwo$, Section~\ref{sec:method:cost}), and, on the quarter
annulus, with two classical post-processors of the same $\uh$: a SIAC filter (Section~\ref{sec:related-superconv})
and a patch recovery of $\uh$ with the boundary data, that is, the same fit without the physics rows
(Remark~\ref{rem:space}).

The accuracy is measured by the relative errors
\begin{equation}\label{eq:err}
  \eLtwo=\frac{\|u-u^\ast\|_{L^2(\Omega)}}{\|u^\ast\|_{L^2(\Omega)}},\qquad
  \eHone=\frac{\|u-u^\ast\|_{H^1(\Omega)}}{\|u^\ast\|_{H^1(\Omega)}},
\end{equation}
with $u^\ast$ the exact solution and $u$, in \eqref{eq:err} and in the captions of this section, the solution
computed by the method in question ($\uh$, $\uhyb$ or a network), both evaluated by Gauss quadrature on the exact
geometry; the tables report these relative errors. For each benchmark the methods are compared on one
\emph{reference mesh}, the mesh named in the caption of its table; results on the other meshes computed are quoted in
the text and in the figures. The error maps of the figures show instead the pointwise absolute error $|u-u^\ast|$, each panel on
its own color bar, and the number at the upper end of a bar is the \emph{maximum pointwise absolute error}
$\max|u-u^\ast|$ over the plotted points of that panel. It is neither relative nor averaged over the domain, so it
is not comparable with the numbers in the tables. Network architectures other than $[2,10,10,1]$ are given in the
tables. The size listed for IGA and
IGA--KAN is that of the Galerkin solve: `unknowns' are the coefficients not fixed by the Dirichlet data (quarter
annulus, thick ring, flower, Bratu equation), and `coefficients' or `control points' are all coefficients of the
spline space, those fixed by the Dirichlet data included (the other examples). The post-process adds one local model per knot vertex, with $120$
coefficients in two dimensions and $405$ in three, each determined by its own least-squares problem. All computations are in double precision on one CPU
(Apple M4); the times of IGA and IGA--KAN are medians of three runs, and the time of IGA--KAN includes the solve.

\subsection{Poisson equation}

\subsubsection{Quarter annulus}\label{sec:poisson}
We start from the Poisson equation on a quarter annulus, the standard curved domain of isogeometric analysis
\cite{hughes2005isogeometric},
\begin{equation}\label{eq:p1}
  -\Delta u=f\ \text{ in }\Omega=\{(x,y):\,1<x^2+y^2<4,\ x>0,\ y>0\},\qquad u=0\ \text{ on }\partial\Omega ,
\end{equation}
with $f$ chosen such that $u^\ast(x,y)=(x^2+y^2-1)(x^2+y^2-4)\sin x\sin y$. The domain is one NURBS patch whose arcs
are exact. On the reference mesh, $p=2$ on $16\times16$ elements ($256$ unknowns), the hybrid lowers the
error of the IGA solution by a factor of $19$ in $L^2$ and of $90$ in $H^1$, in $1.2$~s, and is more than
two orders of magnitude more accurate than both networks (Table~\ref{tab:ex1}). Figure~\ref{fig:ex1} shows the three
kinds of error side by side: that of the network is irregular and largest along the boundary, where its data are
imposed by a penalty; that of IGA follows the element pattern of the spline space; and that of the hybrid is more
than an order of magnitude smaller, what remains lying mostly next to the outer arc.

\begin{figure}[!htbp]
\centering
\includegraphics[width=\linewidth]{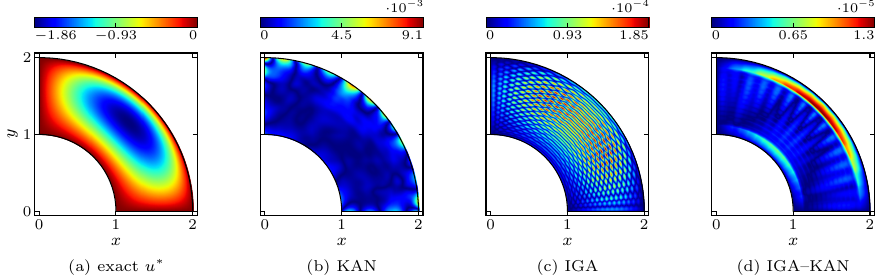}
\caption{Poisson equation on the quarter annulus, $p=2$ on $16\times16$ elements: (a) the exact solution; (b)--(d)
the pointwise absolute errors of the KAN, of IGA and of IGA--KAN, each on its own color bar. The upper end of
each error color bar is the maximum pointwise absolute error $\max|u-u^\ast|$ of that panel; the relative errors
$\eLtwo$ and $\eHone$ are given in Table~\ref{tab:ex1}.}
\label{fig:ex1}
\end{figure}

\begin{table}[!htbp]
\centering
\caption{Poisson equation on the quarter annulus: comparison of the methods on the reference mesh ($p=2$,
$16\times16$).}
\label{tab:ex1}
\small
\begin{tabular}{@{}lrccr@{}}
\toprule
method & size & $\eLtwo$ & $\eHone$ & time [s] \\
\midrule
KAN     & 1300 parameters & $\sci{1.22}{-3}$ & $\sci{4.75}{-3}$ & 49 \\
KINN    & 1430 parameters & $\sci{1.50}{-3}$ & $\sci{4.77}{-3}$ & 538 \\
IGA     & 256 unknowns    & $\sci{6.57}{-5}$ & $\sci{1.51}{-3}$ & 0.006 \\
IGA--KAN & 256 unknowns   & $\mathbf{\sci{3.46}{-6}}$ & $\mathbf{\sci{1.67}{-5}}$ & 1.18 \\
\midrule
\multicolumn{2}{@{}l}{gain of IGA--KAN over IGA} & $19.0\times$ & $90.0\times$ & \\
\multicolumn{2}{@{}l}{gain of IGA--KAN over KINN} & $434\times$ & $285\times$ & \\
\bottomrule
\end{tabular}
\end{table}

\subsubsection{Unit disk: forward and inverse solution}\label{sec:inverse}
Next we solve the disk benchmark of PI-GGN \cite{gao2022piggn} (their Sec.~3.1),
\begin{equation}\label{eq:disk}
  -\Delta u=f\ \text{ in }\Omega=\{\bx:\,|\bx|<1\},\qquad u=0\ \text{ on }\partial\Omega,\qquad
  u^\ast=\tfrac14\,(1-x^2-y^2)\ \text{ for } f=1 .
\end{equation}
The disk is a single quadratic NURBS patch with an exact circle. In the \emph{forward} problem $f=1$ is given. In
the \emph{inverse} problem the constant source $f^\ast=2$ is unknown and $u$ is observed at one point,
$\bx_s=(0.46,-0.46)$, read from Fig.~3a of PI-GGN. Because $u$ is linear in $f$, IGA and IGA--KAN identify the source
by one division, $\hat f=u^\ast_{f^\ast}(\bx_s)/u^h_1(\bx_s)$, the observed value divided by the value at $\bx_s$ of
the solution computed with $f=1$; KINN trains the network together with $f$, with a
penalty $\lambda=1000$ on the datum or with the datum enforced exactly, as PI-GGN does. On $25$ control points, the
size of PI-GGN's graph, the hybrid is the most accurate of all methods in both problems (Fig.~\ref{fig:disk-fields},
Table~\ref{tab:disk}). The solution is a quadratic polynomial, which the local models reproduce exactly away from the
boundary, so this benchmark is a favorable case for the hybrid. PI-GGN's error is a nodal norm and is quoted as
reported. For $20$ observation points drawn at random in the disk the ratio of the median source errors of IGA and
IGA--KAN is $51$, against $167$ at $\bx_s$. With a relative Gaussian perturbation of standard deviation $\sigma$
of the observation ($400$ realizations), the root-mean-square source error of IGA--KAN is $153$, $38$, $4.4$ and
$1.09$ times smaller than that of IGA for $\sigma=10^{-5}$, $10^{-4}$, $10^{-3}$ and $10^{-2}$: the advantage shrinks
once the noise exceeds the source error of the hybrid and is lost once it exceeds that of IGA, $\sci{4.14}{-3}$.

Figure~\ref{fig:disk-curves} shows that these results are not a property of the $25$-point mesh. In the forward
problem (a) the hybrid stays below IGA on every mesh at both degrees. In the inverse problem (b) the source error of
KINN falls below PI-GGN's bound of $1\%$ only in the L-BFGS stage, after $2000$ Adam iterations, and ends between the
one-division results of IGA and IGA--KAN. Under refinement (c) the one-division source of IGA and IGA--KAN converges
with the mesh, while that of the network, shown as a band, does not depend on it.

\begin{figure}[!htbp]
\centering
\includegraphics[width=\linewidth]{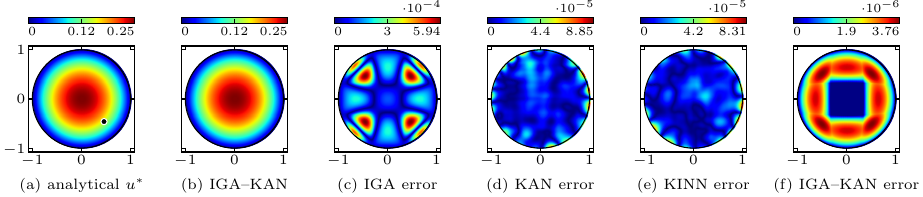}
\caption{Unit disk, forward problem, $p=2$ on $3\times3$ elements ($25$ control points): (a) the analytical solution
with the observation point of the inverse problem; (b) the IGA--KAN solution; (c)--(f) the pointwise errors of IGA,
the KAN, KINN and IGA--KAN, each on its own color bar (color map and layout as in PI-GGN's Figs.~2--3). The
upper end of each error color bar is the maximum pointwise absolute error $\max|u-u^\ast|$ of that panel; the
relative errors $\eLtwo$ and $\eHone$ are given in Table~\ref{tab:disk}.}
\label{fig:disk-fields}
\end{figure}

\begin{figure}[!htbp]
\centering
\includegraphics[width=\linewidth]{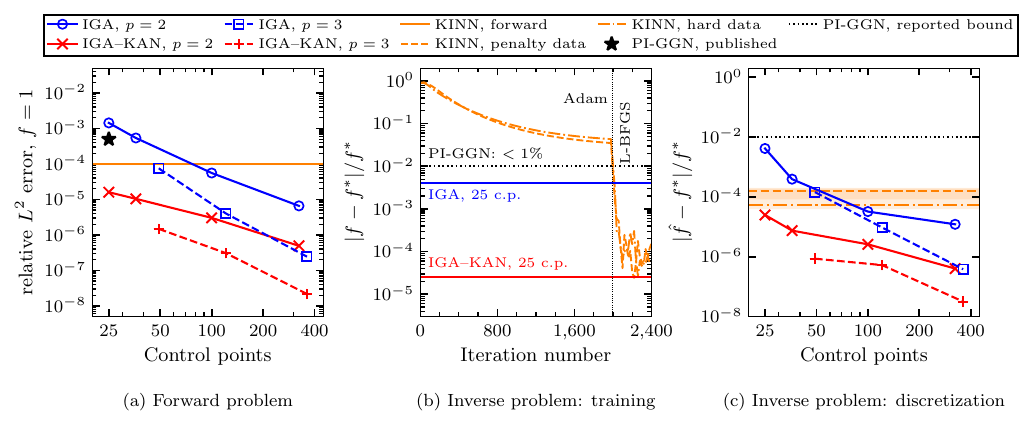}
\caption{Unit disk: (a) relative $L^2$ error $\eLtwo$ of the forward problem against the number of control points, for IGA
and IGA--KAN at $p=2,3$, KINN and PI-GGN's published value (a nodal error, as in Table~\ref{tab:disk}, not $\eLtwo$); (b) relative error of the inferred source during the
training of KINN (penalty and hard data), against the one-division results of IGA and IGA--KAN on $25$ control
points and PI-GGN's reported bound; (c) relative error of the inferred source against the number of control
points, the band marking the two KINN results.}
\label{fig:disk-curves}
\end{figure}

\begin{table}[!htbp]
\centering
\caption{Unit disk: comparison of the methods on $25$ control points ($p=2$, $3\times3$), the size of PI-GGN's graph.
Forward: relative errors of $u$ for $f=1$. Inverse: relative error of the identified source $|\hat f-f^\ast|/f^\ast$.
The times of IGA and IGA--KAN are those of the forward problem; identifying the source adds $0.002$~s to each.}
\label{tab:disk}
\small
\begin{tabular}{@{}lrcccr@{}}
\toprule
method & size & $\eLtwo$ (forward) & $\eHone$ (forward) & source error (inverse) & time [s] \\
\midrule
PI-GGN \cite{gao2022piggn} & 25 nodes & $\sci{5}{-4}$ (nodal) & --- & $<1\%$ & --- \\
KAN                 & 1300 parameters & $\sci{1.12}{-4}$ & $\sci{5.02}{-4}$ & --- & 105 \\
KINN, forward       & 1430 parameters & $\sci{9.75}{-5}$ & $\sci{4.17}{-4}$ & ---               & 196 \\
KINN, penalty data  & 1430 parameters & ---               & ---               & $\sci{1.55}{-4}$ & 200 \\
KINN, hard data     & 1430 parameters & ---               & ---               & $\sci{5.39}{-5}$ & 205 \\
IGA                 & 25 control points & $\sci{1.43}{-3}$ & $\sci{7.07}{-3}$ & $\sci{4.14}{-3}$ & 0.006 \\
IGA--KAN            & 25 control points & $\mathbf{\sci{1.59}{-5}}$ & $\mathbf{\sci{3.22}{-5}}$ & $\mathbf{\sci{2.48}{-5}}$ & 0.08 \\
\bottomrule
\end{tabular}
\end{table}

\subsubsection{Thick ring: a three-dimensional problem}\label{sec:thickring}
To test the method in three dimensions we solve the Poisson equation on the thick ring of GeoPDEs
\cite{defalco2011,vazquez2016},
\begin{equation}\label{eq:ring}
\begin{gathered}
  -\Delta u=f\ \text{ in }\Omega=\{(x,y,z):\,1<x^2+y^2<4,\ x>0,\ y>0,\ 0<z<1\},\qquad u=u^\ast\ \text{ on }\partial\Omega,\\
  u^\ast=\tfrac1{10}\,(r^2-1)(4-r^2)\,z(1-z),
\end{gathered}
\end{equation}
with $r^2=x^2+y^2$. The domain is one trivariate NURBS volume with exact arcs. In three dimensions the local space
needs more directions: the hybrid uses local degree $8$, $45$ ridge directions and $7$ Gauss points per direction,
and omits the $p$-consistency test since $f\neq0$. On the reference mesh, $p=2$ on $16^3$ elements ($4096$
unknowns), it reduces the error of the IGA solution by a factor of $29$ in $L^2$ and of $92$ in $H^1$ and is more than
three orders of magnitude more accurate than the KAN (Table~\ref{tab:ring}).
The solution vanishes on the two cylindrical faces and on $z=0$ and $z=1$, but not on the flat faces $x=0$ and
$y=0$, where the Dirichlet datum is $u^\ast$; Fig.~\ref{fig:ring-fields} shows the errors on two interior sections:
the IGA error is the element-scale pattern of the spline space across the wall, and the hybrid lowers its maximum on
the sections by factors of $12$ and $14$, what remains lying next to the two cylindrical faces.

\begin{figure}[!htbp]
\centering
\includegraphics[width=0.795\linewidth]{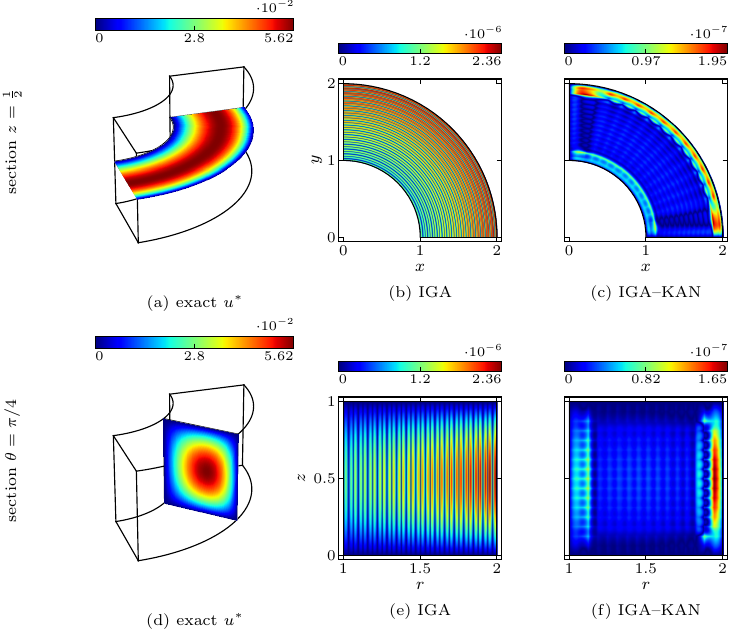}
\caption{Thick ring, $p=2$ on $16^3$ elements, on two interior sections: the mid-height section $z=\tfrac12$ (a--c)
and the diagonal section $\theta=\pi/4$ (d--f). (a), (d) The section inside the ring, colored by the exact solution;
(b), (c), (e), (f) the pointwise absolute errors $|u-u^\ast|$ of IGA and IGA--KAN on that section, the diagonal
section drawn in the $(r,z)$ plane. The solution vanishes on four of the six faces, so the errors are shown on
interior sections. The upper end of each error color bar is the maximum pointwise absolute error of that panel; the
relative errors are given in Table~\ref{tab:ring}.}
\label{fig:ring-fields}
\end{figure}

Figure~\ref{fig:ring} shows the convergence and the role of the local degree. With local degree $8$ (a) the hybrid
gains on all eight meshes, the safeguard accepting $98$ to $100\%$ of the patches: in $L^2$ by factors of $18$ to $37$
at $p=2$, of $3.8$ to $70$ at $p=3$, where the gain is largest on the coarsest mesh, and of $2.0$ to $2.8$ at $p=1$,
where it decreases from the coarser to the finer mesh (at $p=1$ local averaging gains no order,
Section~\ref{sec:related-superconv}); the $H^1$ gains are $64$ to $112$, $18$ to
$102$ and $12$ to $13$. Panel (b) shows the dependence on the local degree, with $(\dloc+1)(\dloc+2)/2=28$, $45$ and $66$ directions for
degrees $6$, $8$ and $10$. Degree $6$ gives gains of $2.0$ to $289$ (in three of its runs, on $16^3$ elements, the
SVD driver did not converge for one patch, which was re-solved after column scaling).
With degree $10$ on the same seven sample points per direction the safeguard
accepts as few as $10\%$ of the patches, and the gain falls to $1.04$ to $14$; panel (c) shows that large gains need
almost all patches accepted. In none of the $24$ runs is the hybrid less accurate than IGA.

The exact solution in \eqref{eq:ring} is a polynomial of degree six and lies in the local space, so, like the disk,
this benchmark is a favorable case. For the non-polynomial test function $u^\ast=e^{x}\sin(xy)\cos z$ of the
thick-ring example distributed with GeoPDEs \cite{defalco2011,vazquez2016}, with Dirichlet data on all six faces and
the same settings, the hybrid gains on every mesh, by smaller factors: on the reference mesh by $3.7$ in $L^2$ and
$5.8$ in $H^1$, and over all meshes by $3.7$ to $31$ and $4.9$ to $21$ at $p=2$, by $2.7$ to $15$ and $2.8$ to $11$ at
$p=3$ and by $2.2$ to $2.5$ and $2.0$ to $2.5$ at $p=1$, the largest $L^2$ factors on the coarsest mesh; the
safeguard accepts $65$ to $96\%$ of the patches.

\begin{figure}[!htbp]
\centering
\includegraphics[width=\linewidth]{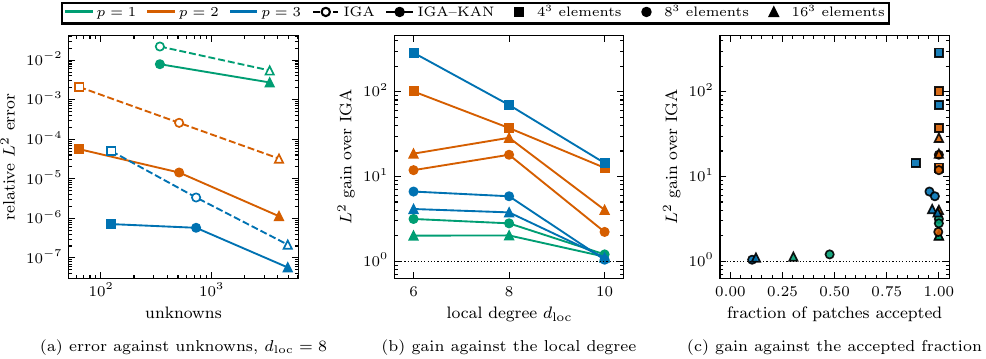}
\caption{Thick ring. (a) Relative $L^2$ error $\eLtwo$ against the number of unknowns for IGA (dashed, open markers) and
IGA--KAN (solid, filled markers) at $p=1,2,3$, local degree $\dloc=8$; (b) $L^2$ gain of IGA--KAN over IGA against the
local degree $\dloc$, one line per degree $p$ and mesh; (c) $L^2$ gain against the fraction of patches the safeguard
accepts, for all $24$ runs. Color: degree $p$; marker: mesh.}
\label{fig:ring}
\end{figure}

\begin{table}[!htbp]
\centering
\caption{Thick ring: comparison of the methods on the reference mesh ($p=2$, $16\times16\times16$).}
\label{tab:ring}
\small
\begin{tabular}{@{}lrccr@{}}
\toprule
method & size & $\eLtwo$ & $\eHone$ & time [s] \\
\midrule
KAN $[3,10,10,1]$ & 1400 parameters & $\sci{5.78}{-3}$ & $\sci{1.82}{-2}$ & 14\,299 \\
IGA      & 4096 unknowns & $\sci{3.24}{-5}$ & $\sci{7.16}{-4}$ & 6.3 \\
IGA--KAN & 4096 unknowns & $\mathbf{\sci{1.13}{-6}}$ & $\mathbf{\sci{7.76}{-6}}$ & 80.9 \\
\midrule
\multicolumn{2}{@{}l}{gain of IGA--KAN over IGA} & $28.7\times$ & $92.1\times$ & \\
\multicolumn{2}{@{}l}{gain of IGA--KAN over KAN} & $5100\times$ & $2340\times$ & \\
\bottomrule
\end{tabular}
\end{table}

\subsection{Laplace equation on a complex geometry}\label{sec:flower}
We then consider the complex-geometry benchmark of KINN \cite{wang2025kinn} (their Sec.~4.3),
\begin{equation}\label{eq:flower}
  \Delta u=0\ \text{ in }\Omega,\qquad u=u^\ast\ \text{ on }\partial\Omega,\qquad u^\ast=\sin x\sinh y+\cos x\cosh y ,
\end{equation}
on the five-petal flower bounded by five unit semicircles. The flower is one NURBS patch whose boundary consists of
ten rational quadratic arcs, exact to machine precision \cite{piegl1997nurbs}. KINN reports $\sci{4.73}{-3}$ for its
boundary-integral KAN and $\sci{2.06}{-4}$ for the same method with a perceptron, and PG-KINN
\cite{sadr2026pgkinn} reports the same value as the former, $\sci{4.73}{-3}$, all three in KINN's own discrete error measure (defined in
Table~\ref{tab:flower}). On the reference mesh, $p=2$ with $80\times16$ elements, IGA--KAN reduces the error of the
IGA solution by a factor of $4.2$ and, in KINN's measure, is $95$ times more accurate than the published KINN and
PG-KINN results (Fig.~\ref{fig:flower}, Table~\ref{tab:flower}). The NURBS patch is polar: its eighty angular elements
close on a periodic seam and its innermost side collapses to the center, where the local models are identified
(Section~\ref{sec:method:blend}). Since $f\equiv0$, the $p$-consistency test (d) is active; with it the safeguard accepts $904$ of the
$1377$ patches although the solution is smooth, against $1296$ without it, where the gain would be $14$ in $L^2$ and
$44$ in $H^1$. The
figure shows how the errors are distributed: that of the KAN is spread over the whole flower, that of IGA follows the
radial element lines and peaks at the tips of the petals, and the hybrid removes most of it, the remainder lying along a
few radial lines.

\begin{figure}[!htbp]
\centering
\includegraphics[width=\linewidth]{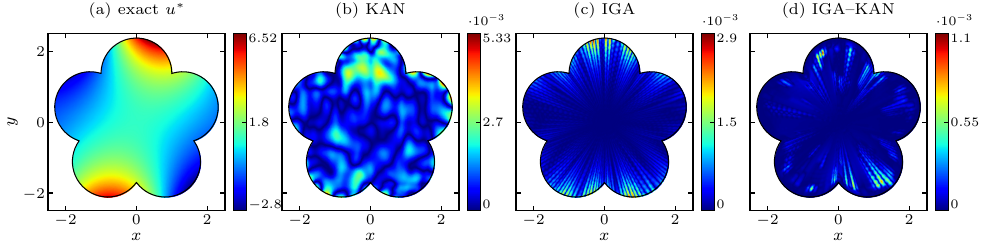}
\caption{Flower, $p=2$ on $80\times16$ elements: (a) the exact solution; (b)--(d) the pointwise absolute errors of the
KAN, of IGA and of IGA--KAN, each on its own color bar (color map as in KINN's Fig.~25). The upper end of each
error color bar is the maximum pointwise absolute error $\max|u-u^\ast|$ of that panel, $\sci{1.1}{-3}$ for
IGA--KAN; Table~\ref{tab:flower} gives the relative errors, $\eLtwo=\sci{4.80}{-5}$ for IGA--KAN.}
\label{fig:flower}
\end{figure}

\begin{table}[!htbp]
\centering
\caption{Flower: comparison of the methods on the reference mesh ($p=2$, $80\times16$ elements, $1361$ unknowns).
Columns 3 and 4 are the relative errors $\eLtwo$ and $\eHone$ of \eqref{eq:err}, computed by Gauss quadrature over the
whole flower. Column 5 is KINN's error measure, $\|u-u^\ast\|_2/\|u^\ast\|_2$ with discrete $2$-norms over KINN's
$52\,264$ evaluation points (a uniform $300\times300$ grid restricted to the flower, without a band about $0.03$ wide
along the boundary); it is the only measure in which the published boundary-integral neural network (BINN; KINN's Table~6) and PG-KINN
(its Table~5) numbers exist, so
only this column compares all methods. A dash means \emph{not reported}.}
\label{tab:flower}
\footnotesize
\begin{tabular}{@{}lrcccr@{}}
\toprule
 & & \multicolumn{2}{c}{our measures} & KINN's measure & \\
\cmidrule(lr){3-4}\cmidrule(lr){5-5}
method & size & $\eLtwo$ & $\eHone$ & $\|u-u^\ast\|_2/\|u^\ast\|_2$ & time [s] \\
\midrule
\multicolumn{6}{@{}l}{\emph{published}}\\
KINN-BINN (KAN) \cite{wang2025kinn}      & 975 parameters  & --- & --- & $\sci{4.73}{-3}$ & --- \\
BINN (MLP) \cite{wang2025kinn}           & 2911 parameters & --- & --- & $\sci{2.06}{-4}$ & --- \\
PG-KINN \cite{sadr2026pgkinn}            & $[2,5,5,5,1]$   & --- & --- & $\sci{4.73}{-3}$ & --- \\
\midrule
\multicolumn{6}{@{}l}{\emph{this work}}\\
KAN      & 1300 parameters & $\sci{6.24}{-4}$ & $\sci{2.40}{-3}$ & $\sci{6.24}{-4}$ & 691 \\
IGA      & 1361 unknowns   & $\sci{2.00}{-4}$ & $\sci{4.34}{-3}$ & $\sci{1.94}{-4}$ & 0.04 \\
IGA--KAN & 1361 unknowns   & $\mathbf{\sci{4.80}{-5}}$ & $\mathbf{\sci{1.05}{-3}}$ & $\mathbf{\sci{4.96}{-5}}$ & 19.6 \\
\bottomrule
\end{tabular}
\end{table}

\subsection{Nonlinear Bratu equation}\label{sec:bratu}
The Bratu equation is the standard nonlinear benchmark of isogeometric analysis \cite{dalcin2016petiga},
\begin{equation}\label{eq:bratu}
  -\Delta u=\lambda e^{u}\ \text{ in }\Omega=(0,1)^2,\qquad u=u^\ast\ \text{ on }\partial\Omega,\qquad
  u^\ast(x,y)=-2\ln\frac{\cosh\bigl((x-\tfrac12)\theta/2\bigr)}{\cosh(\theta/4)},
\end{equation}
with $\lambda=1$ and $\theta$ the lower-branch root of $\theta=\sqrt{2\lambda}\cosh(\theta/4)$. The isogeometric solve
uses Picard iteration, each step one linear Galerkin solve. The hybrid keeps the local fit linear by freezing the source
at $\lambda e^{\uh}$, and reduces the error of the IGA solution by a factor of $18$ in $L^2$ and of $84$ in $H^1$ on the
reference mesh; KINN's layer is the better of the two networks here, but remains three orders of magnitude behind
the hybrid in $L^2$ (Fig.~\ref{fig:bratu}, Table~\ref{tab:bratu}). The solution depends on $x$ only. The figure shows that
the IGA error is the element-scale oscillation of the spline space across the whole square, and that the error of
the hybrid is more than ten times smaller at its maximum and lies mainly in the patches next to the sides $x=0$ and
$x=1$.

\begin{figure}[!htbp]
\centering
\includegraphics[width=0.72\linewidth]{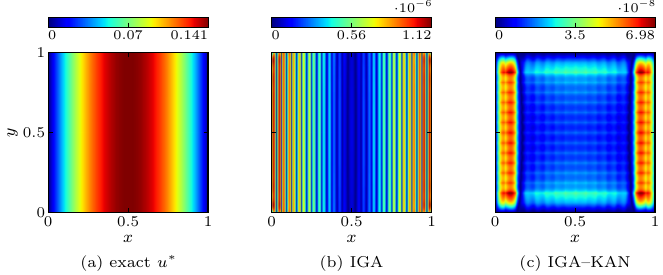}
\caption{Bratu equation, $\lambda=1$, $p=2$ on $16\times16$ elements: (a) the exact solution; (b), (c) the pointwise
absolute errors of IGA and of IGA--KAN, each on its own color bar. The upper end of each error color bar is the
maximum pointwise absolute error $\max|u-u^\ast|$ of that panel; the relative errors $\eLtwo$ and $\eHone$ are given
in Table~\ref{tab:bratu}. The network runs of that table stored their errors but not the networks, so they have no
panel.}
\label{fig:bratu}
\end{figure}

\begin{table}[!htbp]
\centering
\caption{Bratu equation, $\lambda=1$: comparison of the methods on the reference mesh ($p=2$, $16\times16$, $256$
unknowns). IGA takes $11$ Picard iterations.}
\label{tab:bratu}
\small
\begin{tabular}{@{}lrccr@{}}
\toprule
method & size & $\eLtwo$ & $\eHone$ & time [s] \\
\midrule
KAN      & 1300 parameters & $\sci{6.94}{-4}$ & $\sci{3.50}{-3}$ & 690 \\
KINN     & 1430 parameters & $\sci{2.95}{-4}$ & $\sci{9.96}{-4}$ & 1618 \\
IGA      & 256 unknowns    & $\sci{4.74}{-6}$ & $\sci{1.48}{-4}$ & 0.22 \\
IGA--KAN & 256 unknowns    & $\mathbf{\sci{2.60}{-7}}$ & $\mathbf{\sci{1.75}{-6}}$ & 1.87 \\
\midrule
\multicolumn{2}{@{}l}{gain of IGA--KAN over IGA}  & $18.3\times$ & $84.3\times$ & \\
\multicolumn{2}{@{}l}{gain of IGA--KAN over KINN} & $1140\times$ & $568\times$ & \\
\bottomrule
\end{tabular}
\end{table}

\subsection{Time-dependent problems}

\subsubsection{Fisher--Kolmogorov equation}\label{sec:fk}
We begin with the Fisher--Kolmogorov equation, the model of prion-like protein spreading in neurodegeneration, in
the verification test of Corti, Bonizzoni, Ded\`e, Quarteroni and Antonietti \cite{corti2023fk} (their Test case 1),
first on its own domain, the unit square, and then on a slice of a human brain segmented from magnetic resonance
images (MRI),
\begin{equation}\label{eq:fk}
  c_t=\kappa\,\Delta c+\alpha\,c\,(1-c)+f\ \text{ in }(0,1)^2\times(0,T],\qquad \kappa=\alpha=1,\qquad T=10^{-3},
\end{equation}
with Dirichlet data, initial data and the source $f$ derived from the exact solution
\begin{equation}\label{eq:fk-exact}
  c^\ast=\bigl(\cos\pi x\cos\pi y+2\bigr)\,e^{-t}.
\end{equation}
Corti et al.\ solve it with a polytopal discontinuous Galerkin method and report optimal convergence in the energy
norm at $T$; they report the errors only as convergence plots in their energy norm, without tabulated values, so the
comparison here is with IGA and with the networks. IGA advances
\eqref{eq:fk} by Crank--Nicolson with their step $\Delta t=10^{-5}$ and Newton's method at every step, from the
Ritz projection of the initial datum, to the discrete solution $\ch$ at $T$, which is then super-resolved. Over this horizon the exact solution
decays by $0.1\%$ only, so the test, like that of Corti et al., verifies the spatial accuracy at $T$; it does not
probe the accumulation of error over long times. The hybrid keeps the local fit linear by Newton's rule,
$c(1-c)\approx\ch(1-\ch)+(1-2\ch)(\Kv-\ch)$, and treats the time derivative as a given function: its semi-discrete
value at $T$ is obtained from the Galerkin equations of the solve by one mass-matrix solve, with the boundary values
of $\partial_tc^\ast$, and is moved to the right-hand side (the coupled system of Section~\ref{sec:adr} is treated in
the same way). The linearized reaction coefficient $\alpha(2\ch-1)$ is positive, since $c^\ast\ge e^{-T}>\tfrac12$
and $\ch$ is close to it, so the residual test (c) has its maximum-principle constant \eqref{eq:stab}, with $c_0$ the minimum of this coefficient
over the sample points of the patch. The two networks take $(x,y,t/T)$ as input,
with the residual of \eqref{eq:fk} in physical units, as on the coupled system (where $T=1$).

\paragraph{(a) Unit square} On the reference mesh, $p=2$ on $16\times16$ elements ($324$ coefficients), the hybrid reduces the error of the IGA solution by
a factor of $27$ in $L^2$ and of $148$ in $H^1$ (Table~\ref{tab:fk}a), and on every mesh from $4\times4$ to $32\times32$ at $p=2$ and $3$
the gain is $11$ to $54$ in $L^2$ and $15$ to $265$ in $H^1$, the safeguard accepting all patches but one;
the two networks stay at $L^2$ errors of $10^{-2}$ to $10^{-1}$, three to four orders of magnitude above IGA and four to five above the hybrid; KINN's layer is the better of the two here, its three seeds ending between $\sci{1.1}{-2}$ and $\sci{8.4}{-2}$ in $L^2$. With the budget of $1000+200$ iterations, the horizon $T=10^{-3}$ of Corti et al.\ and the scaled input $t/T$, the time derivative enters the residual with the factor $1/T=10^{3}$, and neither network has converged within the budget: over the last $160$ L-BFGS iterations the loss of the KAN still falls from $47$ to $15$ (seed $0$). The network errors are those of this budget. As in the previous examples, the errors of the networks are smooth structures of the size of the
solution's features, the IGA error is the element-scale pattern of the spline space, and the hybrid removes most of
it (Fig.~\ref{fig:fk}).

\begin{figure}[!htbp]
\centering
\includegraphics[width=\linewidth]{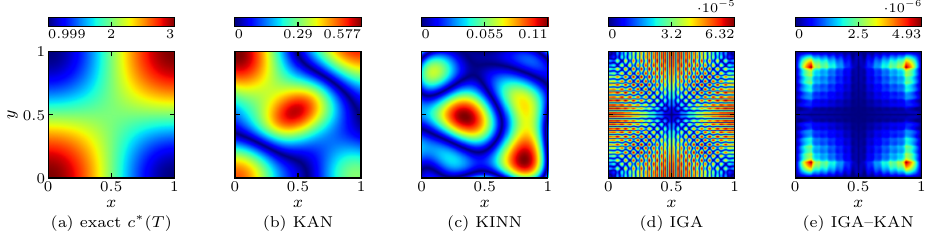}
\caption{Fisher--Kolmogorov equation on the unit square at $t=T$, $p=2$ on $16\times16$ elements: (a) the exact solution; (b)--(e) the
pointwise absolute errors of the KAN, of KINN, of IGA and of IGA--KAN, each on its own color bar (networks: the seed
with the median $L^2$ error). The upper end of each error color bar is the maximum pointwise absolute error of that
panel; the relative errors are given in Table~\ref{tab:fk}a.}
\label{fig:fk}
\end{figure}

\paragraph{(b) Brain slice from MRI} Corti et al.\ also apply the equation to brain geometries reconstructed from MRI
of the OASIS-3 database, a sagittal slice and a three-dimensional brain, in simulations of protein spreading without
an exact solution and on data that, according to the article's data-availability statement, its authors are not
permitted to share. To test the hybrid on an anatomy of that kind we pose the same verification problem on a slice
of a real brain. The slice is taken from the public MRI2FEM data set \cite{mardal2022mri2fem,mri2femdata}, which
holds MR image series under two subject labels and their FreeSurfer reconstructions \cite{fischl2012freesurfer}; we
use the reconstruction labeled \texttt{abby}, its automatic segmentation and its $T_1$-weighted volume in
FreeSurfer's conformed space ($1$~mm voxels), and take the axial slice in which the lateral ventricles have their
largest area. The computational domain is the tissue between two closed curves about the centroid of the ventricles:
the radial envelope of the lateral ventricles with their choroid plexus, after a morphological closing ($3\times3$
structuring element, six iterations) that fills the gaps between the two ventricles, and that of the segmented brain
with its holes filled, each low-pass filtered ($6$ and $10$ Fourier harmonics) and fitted with a periodic cubic
B-spline with $32$ control points (Fig.~\ref{fig:fk-brain}a,b). It is one spline patch, periodic in the angular
direction, and the local models next to the seam are fitted on supports that wrap around it. The envelopes follow the
outline of the brain and of the ventricles but bridge the cortical folds: every point of the two spline curves lies
within $4.6$~mm of the voxel contour of its mask; in the other direction the voxel contour lies up to $9.9$~mm from
the inner curve, which cuts off the anterior tips of the ventricles, and up to $36$~mm from the outer one, the depth
of the bridged interhemispheric fissure; the symmetric mean distance, the average of the two one-sided means, is
$3.1$~mm for the outer curve and $1.9$~mm for the inner one; and $91\%$ of the domain is segmented brain tissue.
The domain, $152$~mm in its larger, antero-posterior extent, is scaled uniformly into the unit square, and
\eqref{eq:fk}--\eqref{eq:fk-exact} are solved on it unchanged, with Dirichlet data on both curves. This is the
verification problem of Corti et al.\ on a real anatomy; their simulations of protein spreading, which have no exact
solution, were not attempted.

On the reference mesh, $p=2$ on $32\times8$ elements ($320$ coefficients), the hybrid reduces the error of the IGA solution by a
factor of $11$ in $L^2$ and of $13.5$ in $H^1$, less than on the square (Table~\ref{tab:fk}b, Fig.~\ref{fig:fk-brain}), and on every mesh from $16\times4$ to
$128\times32$ at $p=2$ and $3$ the gain is $7.7$ to $25$ in $L^2$ and $13.5$ to $23$ in $H^1$, the safeguard accepting all but
$13$ of the $11\,488$ vertex patches (the vertices on the seam are counted at both of its ends). On the slice only the KAN was trained (KINN's layer was not run there), with the loss, optimizer and budget of part (a) and the same numbers of points, $27\,000$ in the interior, placed at Gauss points of the parametric ring, and $2420$ on the two curves and the initial face. Its three seeds end at $L^2$ errors of $\sci{1.2}{-1}$ to $\sci{1.6}{-1}$, two orders of magnitude above IGA and three above the hybrid, and, as on the square, its training has not converged within the budget (the loss still falls from $13$ to $10$ over the last $160$ L-BFGS iterations of seed $0$).

\begin{figure}[!htbp]
\centering
\includegraphics[width=\linewidth]{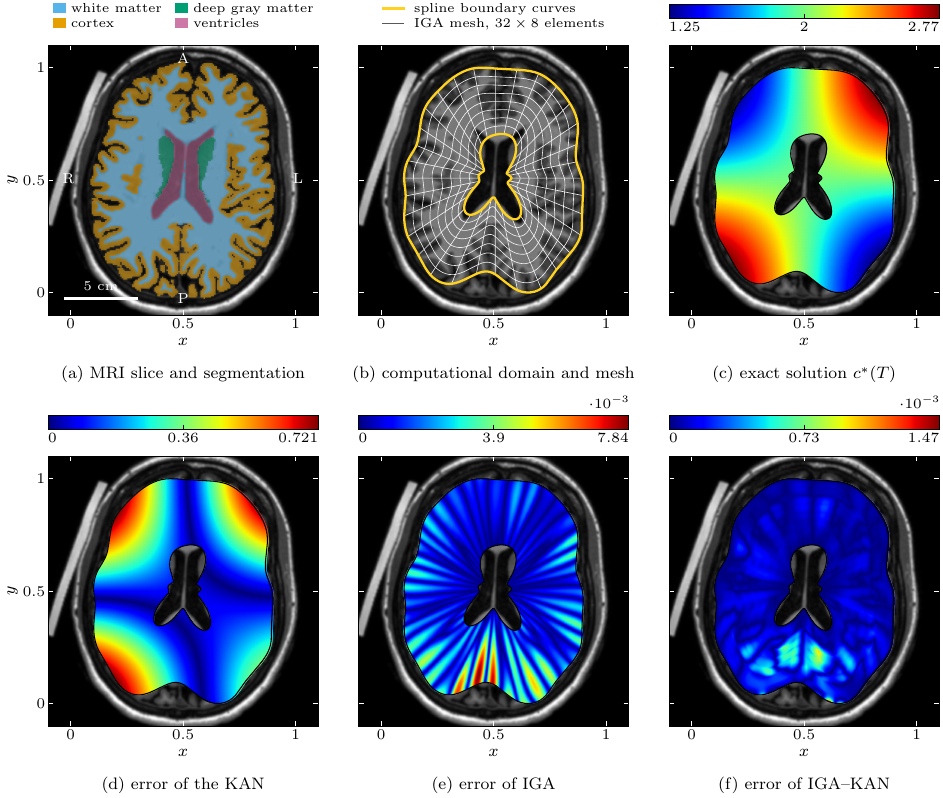}
\caption{Fisher--Kolmogorov equation on a brain slice from MRI, all panels drawn on the $T_1$-weighted slice of the
MRI2FEM data set \cite{mri2femdata} (subject \texttt{abby}, CC BY 4.0) in the coordinates of the unit square. (a) The slice with its FreeSurfer segmentation. (b) The two
spline curves that bound the computational domain and the reference mesh, $p=2$ on $32\times8$ elements. (c) The
exact solution at $t=T$. (d)--(f) The pointwise absolute errors at $t=T$ of the KAN (the seed with the median $L^2$
error), of IGA and of IGA--KAN, each on its own color bar, whose upper end is the maximum pointwise absolute error
of that panel; the relative errors are given in Table~\ref{tab:fk}b.}
\label{fig:fk-brain}
\end{figure}

\begin{table}[!htbp]
\centering
\caption{Fisher--Kolmogorov equation: comparison of the methods at $t=T$ on the reference meshes, (a) on the unit
square ($p=2$, $16\times16$) and (b) on the brain slice from MRI ($p=2$, $32\times8$). The times of the networks
are wall-clock times of single training runs, the median of three seeds, and vary between seeds (KINN: $3069$ to
$4970$~s).}
\label{tab:fk}
\small
\begin{tabular}{@{}lrccr@{}}
\toprule
method & size & $\eLtwo$ & $\eHone$ & time [s] \\
\midrule
\multicolumn{5}{@{}l}{\emph{(a) unit square}}\\
KAN $[3,10,10,1]$  & 1400 parameters & $\sci{1.35}{-1}$ & $\sci{4.41}{-1}$ & 1883 \\
KINN               & 1540 parameters & $\sci{2.24}{-2}$ & $\sci{1.51}{-1}$ & 3820 \\
IGA                & 324 coefficients & $\sci{1.51}{-5}$ & $\sci{1.06}{-3}$ & 0.49 \\
IGA--KAN           & 324 coefficients & $\mathbf{\sci{5.57}{-7}}$ & $\mathbf{\sci{7.14}{-6}}$ & 2.07 \\
\cmidrule(lr){1-5}
\multicolumn{2}{@{}l}{gain of IGA--KAN over IGA}  & $27.1\times$ & $148\times$ & \\
\multicolumn{2}{@{}l}{gain of IGA--KAN over KINN} & $40\,200\times$ & $21\,100\times$ & \\
\midrule
\multicolumn{5}{@{}l}{\emph{(b) brain slice from MRI}}\\
KAN $[3,10,10,1]$  & 1400 parameters & $\sci{1.45}{-1}$ & $\sci{6.41}{-1}$ & 1582 \\
IGA                & 320 coefficients & $\sci{8.92}{-4}$ & $\sci{3.42}{-2}$ & 0.54 \\
IGA--KAN           & 320 coefficients & $\mathbf{\sci{7.96}{-5}}$ & $\mathbf{\sci{2.53}{-3}}$ & 2.37 \\
\cmidrule(lr){1-5}
\multicolumn{2}{@{}l}{gain of IGA--KAN over IGA}  & $11.2\times$ & $13.5\times$ & \\
\multicolumn{2}{@{}l}{gain of IGA--KAN over KAN} & $1\,830\times$ & $254\times$ & \\
\bottomrule
\end{tabular}
\end{table}

\subsubsection{High-frequency heat equation}\label{sec:heat}
KINN demonstrates the spectral bias of multilayer perceptrons on a heat equation of frequency $F=50$, that is, $25$ periods in space
\cite{wang2025kinn,wang2021eigenvector},
\begin{equation}\label{eq:heat}
  u_t=\frac{u_{xx}}{(F\pi)^2}\ \text{ in }(0,1)^2,\qquad u(x,0)=\sin(F\pi x),\quad u(0,t)=u(1,t)=0,\qquad
  u^\ast=e^{-t}\sin(F\pi x).
\end{equation}
It is solved in space--time with the time-upwind Galerkin scheme of Langer, Moore and Neum\"uller \cite{langer2016}.
The three networks follow KINN's protocol: $3000$ Adam epochs with learning rate $10^{-3}$ on $4000$ interior points
resampled at every epoch, three seeds each. The perceptron and the KAN built from our layer both return $u\approx0$;
the KAN built from KINN's layer resolves the $25$ periods, IGA is $54$ times more accurate than it in $L^2$, and
IGA--KAN $829$ times (Fig.~\ref{fig:heat}, Table~\ref{tab:heat}). On the reference mesh the hybrid reduces the error
of IGA by a factor of $15.5$ in $L^2$ and of $32.4$ in $H^1$. On $128\times8$ and $512\times8$ elements the gains at
$p=3$ are $1.28$ and $8.6$ in $L^2$ and $1.28$ and $24$ in $H^1$, on $256\times16$ elements they are $5.8$ and $6.9$,
and at $p=4$ they are $1.3$ to $1.6$ in both norms on the three meshes with eight elements in time.

Figure~\ref{fig:heat} serves two purposes. Panels (a)--(e) show the spectral bias that KINN demonstrates with this
benchmark: the error of the perceptron is the solution itself, the KAN built from KINN's layer leaves pointwise errors up to
$2\times10^{-2}$, IGA up to $2\times10^{-4}$ and IGA--KAN up to $2.5\times10^{-5}$. Panels (f) and (g) show where the
hybrid gains: on meshes with up to about three elements per wavelength, in the upper left, the two maps coincide and
the gain is $1.00$; the gain appears only once the wave is resolved. In the opposite corner both maps level off at
about $\sci{2}{-7}$: at $F=5$ the error of IGA falls only from $\sci{2.9}{-7}$ to $\sci{2.1}{-7}$ between $128$ and
$256$ elements in $x$, and on $256$ elements the two maps coincide again. The figure is drawn like KINN's Fig.~3 (viridis,
a linear color bar per panel, $t$ horizontal), except that (f) and (g) share one logarithmic scale, because their
values span seven orders of magnitude.

\begin{figure}[!htbp]
\centering
\includegraphics[width=\linewidth]{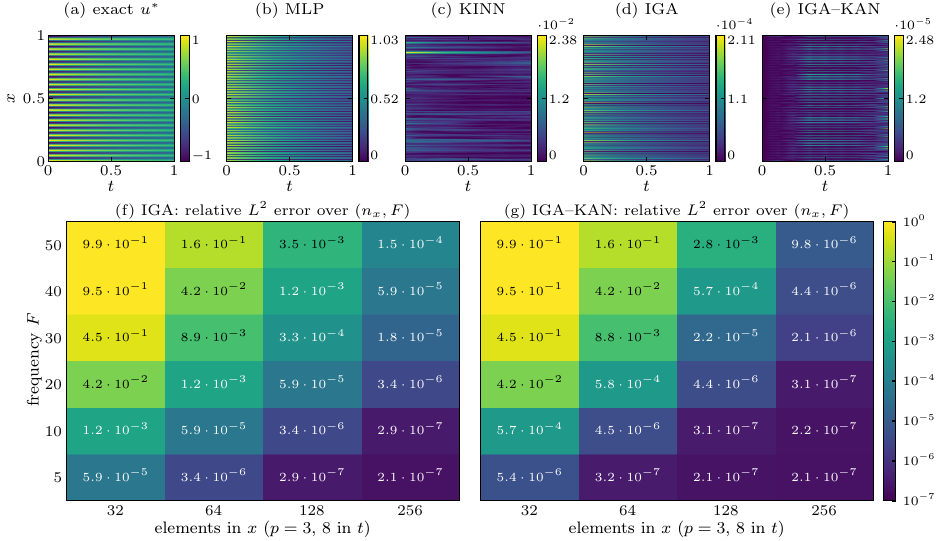}
\caption{High-frequency heat equation, $F=50$: (a) the exact solution; (b)--(e) the pointwise absolute errors of
KINN's perceptron, of the KAN built from KINN's layer, of IGA and of IGA--KAN at $p=3$ on $256\times8$ elements, each
on its own linear color bar, whose upper end is the maximum pointwise absolute error $\max|u-u^\ast|$ of that
panel; (f), (g) the relative $L^2$ error $\eLtwo$ of IGA and of IGA--KAN over the number of elements in $x$ and
the frequency $F$ ($p=3$, $8$ elements in $t$), on one shared logarithmic scale. The numbers printed in (f) and (g)
are relative $L^2$ errors, the measure of Table~\ref{tab:heat}; the color-bar maxima of (b)--(e) are not.}
\label{fig:heat}
\end{figure}

\begin{table}[!htbp]
\centering
\caption{High-frequency heat equation, $F=50$: comparison of the methods on the reference mesh ($p=3$, $256\times8$
space--time elements).}
\label{tab:heat}
\small
\begin{tabular}{@{}lrccr@{}}
\toprule
method & size & $\eLtwo$ & $\eHone$ & time [s] \\
\midrule
MLP $[2,30,30,30,30,1]$ (KINN's)    & 2911 parameters & $1.00$ & $1.00$ & 844 \\
KAN $[2,5,5,1]$, grid 100, our layer & 4160 parameters & $1.00$ & $1.00$ & 12\,177 \\
KINN: the same KAN, KINN's layer    & 4200 parameters & $\sci{8.09}{-3}$ & $\sci{8.46}{-3}$ & 11\,440 \\
IGA      & 2849 coefficients & $\sci{1.51}{-4}$ & $\sci{1.46}{-3}$ & 0.11 \\
IGA--KAN & 2849 coefficients & $\mathbf{\sci{9.76}{-6}}$ & $\mathbf{\sci{4.51}{-5}}$ & 36.0 \\
\midrule
\multicolumn{2}{@{}l}{gain of IGA--KAN over IGA}  & $15.5\times$ & $32.4\times$ & \\
\multicolumn{2}{@{}l}{gain of IGA--KAN over KINN} & $829\times$  & $188\times$ & \\
\bottomrule
\end{tabular}
\end{table}

\subsubsection{Coupled advection--diffusion--reaction system}\label{sec:adr}
Finally we consider the stiff coupled system of Asmouh and Ostermann \cite{asmouh2024nurbs},
\begin{equation}\label{eq:adr}
\begin{aligned}
  &u_t+\mathbf V\cdot\nabla u=d_1\Delta u-bu+v,\qquad v_t+\mathbf V\cdot\nabla v=d_2\Delta v-cv
  \quad\text{in }(0,2\pi)^2\times(0,1],\\
  &d_1=d_2=\tfrac12,\qquad \mathbf V=(\tfrac12,\tfrac12),\qquad b=100,\qquad c=1,
\end{aligned}
\end{equation}
with Dirichlet and initial data from the exact solution
\begin{equation}\label{eq:adr-exact}
  u^\ast=\bigl(e^{-101t}+e^{-2t}\bigr)\cos(x+y-t),\qquad v^\ast=99\,e^{-2t}\cos(x+y-t).
\end{equation}
The two fields are advanced by Crank--Nicolson with Richardson extrapolation and super-resolved at $t=1$, $v$ first
and then $u$ with the super-resolved $v$ in its source. On the reference mesh the hybrid reduces the error of both
fields, by a factor of $5$ in $L^2$ and of $12$ to $27$ in $H^1$, while both networks remain at errors of order $0.4$
(Fig.~\ref{fig:adr}, Table~\ref{tab:adr}). The figure shows the difference in kind: the errors of the networks are
large smooth blobs of the size of the solution's features, the IGA error is the element-scale pattern along the
traveling wave, and the hybrid lowers its maximum by a factor of about eight for $u$ and seven for $v$. On the
coarsest mesh of the study, $8\times8$ elements, the safeguard accepts no patch and the
hybrid coincides with IGA (Section~\ref{sec:mechanism:regimes}); on $16\times16$ and $32\times32$ elements at $p=2$, $3$ and $4$ the gains are $4.9$ to $20$
in $L^2$ and $9.2$ to $104$ in $H^1$.

\begin{figure}[!htbp]
\centering
\includegraphics[width=\linewidth]{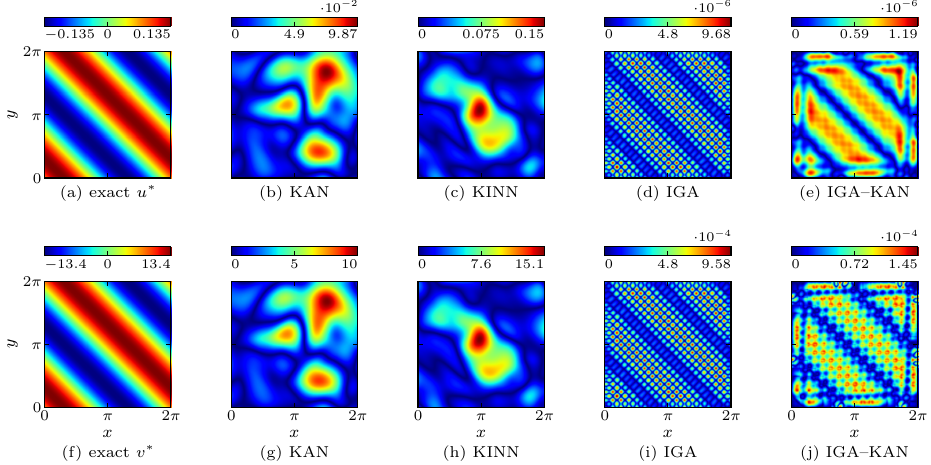}
\caption{Coupled advection--diffusion--reaction system at $t=1$, $p=3$ on $16\times16$ elements; top row $u$, bottom
row $v$: the exact fields and the pointwise absolute errors of the KAN, of KINN, of IGA and of IGA--KAN, each on its
own color bar. The upper end of each error color bar is the maximum pointwise absolute error, $\max|u-u^\ast|$ or
$\max|v-v^\ast|$, of that panel; the relative errors $\eLtwo$ and $\eHone$ of both fields are given in
Table~\ref{tab:adr}.}
\label{fig:adr}
\end{figure}

\begin{table}[!htbp]
\centering
\caption{Coupled advection--diffusion--reaction system: comparison of the methods at $t=1$ on the reference mesh
($p=3$, $16\times16$).}
\label{tab:adr}
\small
\begin{tabular}{@{}lrccccr@{}}
\toprule
 & & \multicolumn{2}{c}{field $u$} & \multicolumn{2}{c}{field $v$} & \\
\cmidrule(lr){3-4}\cmidrule(lr){5-6}
method & size & $\eLtwo$ & $\eHone$ & $\eLtwo$ & $\eHone$ & time [s] \\
\midrule
KAN $[3,10,10,2]$ & 1500 parameters & $\sci{3.66}{-1}$ & $\sci{4.14}{-1}$ & $\sci{3.74}{-1}$ & $\sci{4.24}{-1}$ & 4068 \\
KINN     & 1650 parameters & $\sci{4.28}{-1}$ & $\sci{4.96}{-1}$ & $\sci{4.41}{-1}$ & $\sci{5.06}{-1}$ & 10\,233 \\
IGA      & 361 coefficients & $\sci{3.19}{-5}$ & $\sci{2.90}{-4}$ & $\sci{3.19}{-5}$ & $\sci{2.90}{-4}$ & 0.99 \\
IGA--KAN & 361 coefficients & $\mathbf{\sci{5.92}{-6}}$ & $\mathbf{\sci{1.06}{-5}}$ & $\mathbf{\sci{6.48}{-6}}$ & $\mathbf{\sci{2.36}{-5}}$ & 9.24 \\
\midrule
\multicolumn{2}{@{}l}{gain of IGA--KAN over IGA}  & $5.39\times$ & $27.3\times$ & $4.92\times$ & $12.3\times$ & \\
\multicolumn{2}{@{}l}{gain of IGA--KAN over KINN} & $72\,300\times$ & $46\,700\times$ & $68\,100\times$ & $21\,400\times$ & \\
\bottomrule
\end{tabular}
\end{table}

%% file: sections/conclusions.tex
\section{Conclusions and future work}\label{sec:conclusions}

We kept the classical solver and used a Kolmogorov--Arnold network only where its prediction passes a computable check. A
standard isogeometric Galerkin solve on the exact NURBS geometry provides $\uh$; on every knot-vertex patch one
small Kolmogorov--Arnold ridge model is fitted to the strong form of the equation, to the exact boundary data and
to $\uh$, in one batched linear least-squares problem, and an \emph{a posteriori} safeguard decides patch by patch
whether the local model may replace the isogeometric solution. The post-process therefore has no optimizer, no learning rate and no
initialization, and wherever no local model is accepted the isogeometric solution is kept.

We tested the method on eight benchmarks: three Poisson problems on curved domains, one of them three-dimensional and one also
inverse, the Laplace equation on a complex geometry, the nonlinear Bratu equation, the Fisher--Kolmogorov equation on
the unit square and on a domain fitted to a brain slice from magnetic resonance images, a high-frequency heat equation and a stiff
coupled advection--diffusion--reaction system. On these benchmarks the hybrid reduced the error of the IGA solution it post-processes
by factors of $4.2$ to $90$ in $\Ltwo$ and $4.1$ to $220$ in $\Hone$ on the reference meshes (the upper ends on the
disk, whose solution is a quadratic polynomial; $4.2$ to $27$ and $4.1$ to $148$ on the six benchmarks with
non-polynomial solutions, all but the disk and the thick ring), at an unchanged number of unknowns of the Galerkin solve and at $4$ to about $500$ times
its time, and its $\Ltwo$ error was $6$ to $6\times10^{4}$ times smaller than that of the best Kolmogorov--Arnold
network we trained from scratch on the same equations with a fixed training protocol; these factors describe that
protocol, not the best accuracy such networks can reach. On the disk of PI-GGN and on the flower of KINN and PG-KINN it is
more accurate than the published results, and on the disk it identifies an unknown source from a single observation
by one division, with an error $167$ times smaller than IGA.

The analysis of Section~\ref{sec:mechanism} attributes these gains to the superconvergence of local averages of the
Galerkin solution, which the local fit recovers with a kernel chosen by the equation, and bounds the error of the
blended solution by the residual of the local models, which the safeguard computes, and by their error on the patch
boundaries, for which the consensus of overlapping patches is a computable surrogate. With the settings used throughout, the safeguarded
hybrid did not return a larger error than the isogeometric solution, beyond round-off, on any mesh of the
benchmarks.

\paragraph{Future work} The results suggest the following directions.
\begin{enumerate}
\item \emph{Efficiency.} The fits of different patches are independent, so the post-process maps directly onto a
  GPU, and on the first benchmark a non-redundant basis of the local polynomial space reproduced the result at a
  quarter of the cost; how far the two together reduce the cost of the post-process remains to be measured.
\item \emph{Coarse meshes.} On coarse meshes (up to about three elements per wavelength on the heat equation, and
  on the coarsest mesh of the coupled system) the safeguard returns the isogeometric solution. The patches that
  are not accepted mark where a smooth local model fails the tests, and this map needs no exact solution, which suggests the
  safeguard as a solve-free indicator
  for adaptive refinement; coupling the two, and choosing the patch size, the local degree and the number of sample
  points automatically (in three dimensions the gain depends on them, Section~\ref{sec:thickring}), is a way to
  obtain gains in this regime.
\item \emph{Non-smooth solutions.} Corner and crack singularities and material interfaces are outside a polynomial
  local space. Enriching the local models with the singular functions, where the exponent is known, or combining
  the post-process with graded meshes, is the natural extension. Outer splines with interior knots, which leave the
  polynomial space while the fit stays linear, are the part of the Kolmogorov--Arnold form not used in the method (on
  the quarter annulus they change the errors by less than $5\%$, Remark~\ref{rem:space}); whether they help near
  layers and interfaces remains to be tested.
\item \emph{Vector-valued and higher-order problems.} The ridge identities extend to linear elasticity and to
  fourth-order operators; what is missing there are counterparts of the residual and $p$-consistency tests, without
  which the safeguard does not carry over as it stands.
\item \emph{Sharper guarantees.} Replacing the consensus test by a computable bound on the boundary term of
  \eqref{eq:hybrid-bound} would turn the safeguard from a filter into a guaranteed error estimate, and a quantitative
  account of the gain in terms of the negative-norm estimates would predict it \emph{a priori}; both could also relax
  the $p$-consistency test, which is conservative where the right-hand side vanishes (Section~\ref{sec:flower}).
\item \emph{Noisy data.} In the inverse problem of Section~\ref{sec:inverse} the advantage of the hybrid decreases
  once the noise in the observation approaches the discretization error; a noise-aware weighting of the data rows is
  the natural remedy.
\end{enumerate}